\newcommand{\set}[1]{\left\{#1\right\}}
\newcommand{\norm}[1]{\left\lVert#1\right\rVert}
\newcommand{\abs}[1]{\left\vert#1\right\vert}
\newcommand{\aabs}[1]{\big\lvert#1\big\rvert}
\newcommand{\eps}{\varepsilon}
\newcommand{\ex}[1]{\mathsf{E}\left[#1\right]}
\newcommand{\wt}{\widetilde}
\DeclareMathOperator{\pr}{\mathsf P}
\newcommand{\R}{\mathbb{R}}
\newcommand{\cF}{\mathcal{F}}
\newcommand{\cB}{\mathcal{B}}
\journalname{Methodology and Computing in Applied Probability}
\begin{document}
\title{Stochastic viability and comparison theorems for mixed stochastic differential  equations}
\titlerunning{Stochastic viability and comparison theorems for mixed SDEs}
\author{Alexander Melnikov \and
        Yuliya Mishura \and
        Georgiy Shevchenko}
\institute{A. Melnikov \at
Department of Mathematical and
Statistical Sciences,
University of Alberta,
632 Central Academic Building,
Edmonton, AB T6G 2G1, Canada\\
\email{melnikov@ualberta.ca}
\and
Yu. Mishura \at
              Kyiv National Taras Shevchenko University, Faculty of Mechanics and Mathematics, Department of Probability, Statistics
and Actuarial Mathematics,
Volodymyrska 64, 01601 Kyiv, Ukraine \\
              Tel.: +380-44-259-03-92\\
              Fax: +380-44-259-03-92\\
\email{myus@univ.kiev.ua}
\and
G. Shevchenko \at
              Kyiv National Taras Shevchenko University, Faculty of Mechanics and Mathematics, Department of Probability, Statistics
and Actuarial Mathematics,
Volodymyrska 64, 01601 Kyiv, Ukraine \\
              Tel.: +380-44-259-03-92\\
              Fax: +380-44-259-03-92\\
\email{zhora@univ.kiev.ua}}

\maketitle

\begin{abstract}
For a mixed stochastic differential equation containing both Wiener process and a H\"older continuous process with exponent $\gamma>1/2$, we prove a stochastic viability theorem. As a consequence, we get a result about positivity of solution and a pathwise comparison theorem. An application to option price estimation is given.
\keywords{Mixed stochastic differential equation \and  pathwise integral \and stochastic viability \and comparison theorem \and
long-range dependence \and fractional Brownian motion \and stochastic differential equation with random drift}
\subclass{60G22 \and
60G15 \and 60H10 \and 26A33}%
\end{abstract}

\section*{Introduction}
In this paper, we consider a  multidimensional  mixed stochastic differential equation of the form
\begin{equation}\label{mainsde-intro}
X(t) = X_0 + \int_0^t \Big(a(s,X(s)) ds + b(s,X(s)) dW(s) + c(s,X(s))dZ(s)\Big),
\end{equation}
where $W$ is a standard Wiener process, and $Z$ is an adapted process, which is almost surely H\"older continuous with exponent $\gamma>1/2$.

The strongest motivation to study such mixed equations comes from financial modeling. The observations of stock prices processes suggest  that they are not self-similar: on a larger time scale (months or years) these processes are smoother and have a longer memory than on a smaller time scale (hours or days). One reason for this is that the random noise in the market is a sum of a more irregular ``trading'' noise rendering irrationality
of the stock exchange, and a more regular ``fundamental'' noise rendering a current  economical situation.
The first random noise component prevails, especially for illiquid instruments, in a shorter time periods (days and hours). The second one takes some time to propagate and becomes essential in a long run,  clearly exhibiting  a long memory. Such phenomena  can be modeled by a sum of a Wiener process $W$ and a fractional Brownian motion $B^H$ with the Hurst parameter $H>1/2$. The behavior of this process on a smaller scale  is
mainly influenced by independent increments of $W$ and its irregularity, while on the larger scale the long memory of $B^H$ dictates the evolution of the process. As a result, the mixed model describes the stock price behavior in a better way. Let us note that the long memory effect in the financial markets and application of the models involving fractional Brownian motion and mixed Brownian-fractional Brownian motion were studied in many papers (see \cite{BSV}, \cite{Cheridito}, \cite{RamaCont}, \cite{JPS},  \cite{Witate} and references therein).

The existence and uniqueness of a solution to \eqref{mainsde-intro}
was established in \cite{guernual}, \cite{kubilius}, \cite{mish08}   and \cite{mbfbm-limit} under different assumptions, and the most general results are obtained in \cite{guernual} and \cite{mbfbm-limit}. Besides this, in \cite{mbfbm-limit}
a limit theorem for mixed stochastic differential equations was established, and we apply this result here.

In this paper we study a stochastic viability property of the solution to equation \eqref{mainsde-intro} and the  applications of this property. A stochastic process
$X$ is called viable in a non-random set $D$, if  starting at $X_0\in D$, the process stays in $D$ almost surely. For It\^o stochastic differential equations such property was studied, for example,   in \cite{aubin-doss}, \cite{doss}, \cite{doss-lenglart}, \cite{filipovic}, \cite{milian}, \cite{zabczyk}. Recently (see \cite{ciotir-rascanu}), a viability result was proved for stochastic differential equations with fractional Brownian motion. Studying a viability property of solution of equation \eqref{mainsde-intro}, we deduce the path-wise  comparison theorems as an important application. The first comparison theorems for It\^o stochastic differential equations were obtained in \cite{ikeda-wata}, \cite{skorokhod},  \cite{yamada}. In papers \cite{melnikov}  and \cite{milian}   multidimensional pathwise comparison theorems were obtained too (in the first one, for processes with jumps). Here we prove a viability result for \eqref{mainsde-intro} and deduce the results on positivity and pathwise comparison for solutions of such equations.

The paper is organized as follows. Section 1 contains some preliminaries on generalized pathwise stochastic integration. It contains also some results about existence and uniqueness of solution to \eqref{mainsde-intro} as well as a limit theorem for for solutions of such equations.
The main results on viability, positivity and pathwise comparison of solutions to mixed stochastic differential equations are presented in Section~2. Section 3 gives some applications of the results of Section~2 to the comparison of option  prices in the market models with long-range dependence. More precisely, we consider here pure fractional and mixed Brownian-fractional-Brownian Cox-Ross-Ingersoll (mBfBm-CIR) models. The existence and uniqueness result for the corresponding stochastic differential equations is proved. The comparison theorem is applied to obtain an upper bound for the price of a European-type option on the interest rate presented by the mBfBm-CIR model.   We also give a viability and comparison result for It\^o stochastic differential equations with random coefficients. The proofs of these results are very similar to those for equations with non-random coefficients, so we
put them to Appendix, where we also discuss some multidimensional pathwise comparison results.

\section{Preliminaries}

Let $(\Omega,\cF,\mathbb F,\pr)$ be a complete probability space with a
filtration $\mathbb F = \set{\cF_t,t\ge 0}$ satisfying usual assumptions. Let $W=\set{\big(W_1(t),\dots, W_m(t)\big), t\ge 0}$ be a standard $m$-dimensional $\mathbb F$-Wiener process and $Z=\set{\big(Z_1(t),\dots, Z_r(t)\big), t\ge 0}$
be an $r$-dimensional $\mathbb F$-adapted $\gamma$-H\"older continuous process on this probability space.

We consider the following mixed
stochastic differential equation in $\R^d$:
\begin{equation}
\label{main}
\begin{gathered}
X(t) = X_0 + \int_0^t \Big(a(s,X(s)) ds + \sum_{k=1}^m b_k(s,X(s)) dW_k(s) + \sum_{j=1}^r c_j(s,X(s))dZ_j(s)\Big),\\ t\in[0,T],
\end{gathered}
\end{equation}
where coefficients $a,b_k,c_j\colon [0,T]\times \mathbb R^d\to \mathbb R^d$ are jointly continuous. For brevity,  we will use  notation
$$
b(s,X(s)) dW(s) := \sum_{k=1}^m b_k(s,X(s)) dW_k(s)$$ and $$ c(s,X(s))dZ(s) :=\sum_{j=1}^r c_j(s,X(s))dZ_j(s).
$$
We note that in equation \eqref{main}, the integral w.r.t.\ Wiener process is defined as the standard It\^o integral, and the integral w.r.t.\ fBm is pathwise generalized Lebesgue--Stieltjes integral, whose definition is given below.

\subsection{Generalized Lebesgue--Stieltjes integral}

Consider two  continuous functions  $f,g\in C(\R^+)$.
For $\alpha\in (0,1)$ and $0\leq a<b$ define fractional derivatives
\begin{gather*}
\big(D_{a+}^{\alpha}f\big)(x)=\frac{1}{\Gamma(1-\alpha)}\bigg(\frac{f(x)}{(x-a)^\alpha}+\alpha
\int_{a}^x\frac{f(x)-f(u)}{(x-u)^{1+\alpha}}du\bigg)1_{(a,b)}(x),\\
\big(D_{b-}^{1-\alpha}g\big)(x)=\frac{e^{-i\pi
\alpha}}{\Gamma(\alpha)}\bigg(\frac{g(x)-g(b)}{(b-x)^{1-\alpha}}+(1-\alpha)
\int_{x}^b\frac{g(x)-g(u)}{(u-x)^{2-\alpha}}du\bigg)1_{(a,b)}(x).
\end{gather*}
Note that the latter  notation is slightly different from the one given in \cite{samko}. This simplifies the notation below.
If $f,g$ are such that  $D_{a+}^{\alpha}f\in L_p[a,b]$,  $D_{b-}^{1-\alpha}g\in
L_q[a,b]$ for some $p\in (1,1/\alpha)$, $q = p/(p-1)$, we can define the generalized (fractional) Lebesgue-Stieltjes integral as
\begin{equation}\label{general l-s}
\int_a^bf(x)dg(x)=e^{i\pi\alpha}\int_a^b\big(D_{a+}^{\alpha}f\big)(x)\big(D_{b-}^{1-\alpha}g\big)(x)dx.
\end{equation}

As $Z$ is almost surely $\gamma$-H\"older continuous, it is easy to see that for any $\alpha\in (1-\gamma,1/2)$, $j=1,\dots,r$
$$
\Lambda_{t,\alpha}(Z_j) :=
\sup_{0\leq
u< v\leq t}\aabs{\big(D_{v-}^{1-\alpha}Z_j\big)(u)}
<\infty.
$$
Thus, the integral with respect to $Z_j$ can be defined by relation \eqref{general l-s},
and it  admits  the following estimate:
\begin{equation}\label{ineq}
\abs{\int_a^b f(s)dZ_j(s)} \le
C_{\alpha} \Lambda_{b,\alpha}(Z_j) \int_a^b\bigg(\frac{\abs{f(s)}}{(s-a)^{\alpha}}
+\int_a^s \frac{\abs{f(s)-f(z)}}{(s-z)^{\alpha+1}}dz\bigg)ds.
\end{equation}

\subsection{Assumptions}
Throughout the paper, $C$ will denote a generic constant which may change from line to line. To emphasize  the dependence of the constants on
some parameters, we will put hem into subscripts. By $\abs{\cdot}$ we denote both absolute value and the Euclidian norm in $\R^n$,
by $(\cdot,\cdot)$ -- scalar product in $\R^n$, for any $n\geq 1$. Also, throughout the paper $\alpha\in(1-\gamma,1/2)$ will be fixed.

%

We will assume that coefficients of \eqref{main}
satisfy the following hypotheses.
\begin{enumerate}[(M1)]
\item for all $t\in[0,T]$, $x\in\mathbb R^d$, $k=1,\dots,m$, $j=1,\dots,r$
\begin{equation*}
\abs{a(t,x)} + \abs{b_k(t,x)} + \abs{c_j(t,x)}\le C(1+\abs{x});
\end{equation*}
\item for all $t\in[0,T]$, $x,y\in\mathbb R^d$, $i=1,\dots,d$, $k=1,\dots,m$, $j=1,\dots,r$
\begin{equation*}
\begin{gathered}
\abs{a(t,x)-a(t,y)}+\abs{b_k(t,x)-b_k(t,y)} + \abs{c_j(t,x)-c_j(t,y)}\\+\abs{\partial_{x_i} c_j(t,x)-\partial_{x_i} c_j(t,y)} \le C\abs{x-y};\end{gathered}
\end{equation*}
\item for some $\beta\in (1-\gamma,1]$ and all $t,s\in[0,T]$, $x\in\mathbb R^d$,  $i=1,\dots,d$, $k=1,\dots,m$, $j=1,\dots,r$
\begin{equation*}\begin{gathered}
\abs{a(t,x)-a(s,x)}+\abs{b_k(t,x)-b_k(s,x)} + \abs{c_j(t,x)-c_j(s,x)}\\+\abs{\partial_{x_i} c_j(t,x)-\partial_{x_i} c_j(s,x)}\le C\abs{t-s}^\beta.
\end{gathered}\end{equation*}
\end{enumerate}

\subsection{Unique solvability of mixed stochastic differential equation and limit theorem}

In order to formulate the results, we need to introduce the following norm for a vector-valued function $f$
\begin{gather*}
\norm{f}_{\infty,t} = \sup_{s\in [0,t]} \left(\abs{f(s)} + \int_0^s \abs{f(s)-f(z)}(s-z)^{-1-\alpha}dz\right).
\end{gather*}
and a seminorm
\begin{gather*}
\norm{f}_{0,t} = \sup_{0\le u<v<t} \left(\frac{\abs{f(v)-f(u)}}{(v-u)^{1-\alpha}} + \int_u^v \frac{\abs{f(u)-f(z)}}{(z-u)^{2-\alpha}}dz\right).
\end{gather*}

The first result is about existence and uniqueness of solution.
\begin{theorem}\label{thm-solution-mixed}
 Equation \eqref{main} has a solution such that
\begin{equation}\label{solution-cond}
\norm{X}_{\infty,T}<\infty\quad\text{a.s.}
\end{equation}
This solution is unique in the class of processes satisfying \eqref{solution-cond}.
\end{theorem}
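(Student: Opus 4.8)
The plan is to construct the solution by a Picard-type fixed-point scheme and to deduce uniqueness from a Gronwall-type estimate. The feature that dictates the whole construction is that the two stochastic integrals in \eqref{main} are of entirely different nature: the Wiener integral is an It\^o integral, defined as an $L^2(\Omega)$-limit and estimated through the It\^o isometry or the Burkholder--Davis--Gundy inequality, whereas the integral against $Z$ is the pathwise generalized Lebesgue--Stieltjes integral, defined separately for each $\omega$ and controlled only through the regularity estimate \eqref{ineq}. The working norm must therefore carry both a moment control over $\Omega$ and the pathwise information measured by $\norm{\cdot}_{\infty,t}$; note that the fractional increment built into $\norm{\cdot}_{\infty,t}$ is precisely the quantity that appears on the right-hand side of \eqref{ineq}, so this is the correct norm for the $Z$-integral.

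Since $Z$ is a.s.\ $\gamma$-H\"older continuous, every $\Lambda_{T,\alpha}(Z_j)$ is a.s.\ finite, and I would first localize on the events $\set{\sum_j\Lambda_{T,\alpha}(Z_j)\le N}$, whose probabilities tend to $1$. On such an event I would study the map
$$
(\Phi X)(t)=X_0+\int_0^t a(s,X(s))\,ds+\int_0^t b(s,X(s))\,dW(s)+\int_0^t c(s,X(s))\,dZ(s),
$$
acting on adapted processes $X$ for which $\ex{\norm{X}_{\infty,\tau}^p}$ is finite, where $p$ is chosen large enough and $\tau$ is a short deterministic horizon depending on $N$ and on the constants in (M1)--(M3). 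For the drift and the Wiener part, the Lipschitz bound (M2), together with the It\^o isometry for the pointwise term and Burkholder--Davis--Gundy and Kolmogorov-type estimates for the fractional-increment term contained in $\norm{\cdot}_{\infty,\tau}$, yields a bound $\ex{\norm{\Phi X-\Phi Y}_{\infty,\tau}^p}\le C_\tau\,\ex{\norm{X-Y}_{\infty,\tau}^p}$ in which the contribution of these two terms to $C_\tau$ tends to $0$ as $\tau\to0$.

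The delicate step is the $Z$-integral. To apply \eqref{ineq} to the increment $c(\cdot,X(\cdot))-c(\cdot,Y(\cdot))$, one must control not only the pointwise difference but also its fractional increments in the time variable; this is exactly where the Lipschitz assumption on $\partial_{x_i}c_j$ in (M2) and the H\"older-in-time assumption (M3) are used, allowing one to bound those increments by a constant times $\norm{X-Y}_{\infty,s}$ multiplied by $\Lambda_{T,\alpha}(Z_j)$. On the localized event and for $\tau$ small the full constant $C_\tau$ can be made strictly less than $1$, so $\Phi$ is a contraction and possesses a unique fixed point, i.e.\ a local solution on $[0,\tau]$.

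It remains to patch the local solutions across a partition of $[0,T]$ and then let $N\to\infty$ to remove the localization, the a priori estimate furnished by the growth hypothesis (M1) ensuring that the resulting process satisfies \eqref{solution-cond}; uniqueness in that class follows by applying the same estimate to two solutions $X$ and $Y$. I expect the principal obstacle to lie precisely in this last estimate: because \eqref{ineq} carries the singular kernel $(s-z)^{-\alpha-1}$, the elementary Gronwall lemma is not applicable, and one must invoke a fractional, nonlinear version of Gronwall's inequality adapted to kernels of the form $\int_0^t(t-s)^{-\alpha}(\cdots)\,ds$ in order to close the bound on $\norm{X-Y}_{\infty,t}$ and conclude that $X\equiv Y$.
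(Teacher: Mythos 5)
There is a genuine gap, and it sits exactly where you expect the proof to be routine. The central claim of your scheme --- that $\ex{\norm{\Phi X-\Phi Y}_{\infty,\tau}^p}\le C_\tau\,\ex{\norm{X-Y}_{\infty,\tau}^p}$ with $C_\tau<1$ for $\tau$ small --- is false for the $Z$-integral term. Write $g(s)=c(s,X(s))-c(s,Y(s))=A(s)\big(X(s)-Y(s)\big)$ with $A(s)=\int_0^1 \partial_x c\big(s,Y(s)+\theta(X(s)-Y(s))\big)\,d\theta$. The increment $g(s)-g(z)$, which must be fed into the singular kernel of \eqref{ineq}, contains the term $\big(A(s)-A(z)\big)\big(X(z)-Y(z)\big)$, and (M2)--(M3) only give $\abs{A(s)-A(z)}\le C\big(\abs{s-z}^\beta+\abs{X(s)-X(z)}+\abs{Y(s)-Y(z)}\big)$. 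The resulting bound for the $Z$-part of $\norm{\Phi X-\Phi Y}_{\infty,\tau}$ is of the form $C\,\Lambda_{\tau,\alpha}(Z)\,\norm{X-Y}_{\infty,\tau}\big(1+\norm{X}_{\infty,\tau}+\norm{Y}_{\infty,\tau}\big)$ times a positive power of $\tau$: it is quadratic, not linear, in the processes (this is precisely why (M2) demands a Lipschitz derivative of $c$). No choice of small $\tau$ makes this a contraction on the whole space, and after taking $p$-th moments the factor $1+\norm{X}_{\infty,\tau}+\norm{Y}_{\infty,\tau}$ cannot be discarded --- Cauchy--Schwarz loses integrability at every Picard step. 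Confining $\Phi$ to a ball does not repair this, because an invariant ball must have random radius (an a priori bound involving $\Lambda_{T,\alpha}(Z)$ and the paths of the It\^o integrals), which already forces the argument out of $L^p(\Omega)$.

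A second, independent problem is the localization: the event $\set{\sum_j\Lambda_{T,\alpha}(Z_j)\le N}$ is $\cF_T$-measurable, so its indicator is anticipating; multiplying by it (or conditioning on it) is incompatible with the It\^o isometry and Burkholder--Davis--Gundy estimates you invoke for the Wiener term, since $W$ is no longer a Wiener process under the conditioned measure. One must instead localize with the stopping times $\tau_N=\inf\set{t:\Lambda_{t,\alpha}(Z)\ge N}$ (the map $t\mapsto\Lambda_{t,\alpha}(Z)$ being adapted and nondecreasing) and a correspondingly stopped driver. Note also that $Z$ is assumed only adapted and a.s.\ H\"older --- no integrability whatsoever --- so the solution need not have any finite moments, and no argument that closes in $\ex{\norm{\cdot}^p}$ can succeed without truncating the solution itself. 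These obstructions explain why the paper does not prove the theorem by a moment-space contraction but defers to \cite{mbfbm-limit}, where the argument is carried out $\omega$-by-$\omega$: the It\^o integrals are controlled pathwise by Garsia--Rodemich--Rumsey-type random variables possessing finite moments, and the fixed-point and Gronwall steps (including the fractional Gronwall lemma with kernel $(t-s)^{-\alpha}$, which you correctly identify as necessary) are then run with random constants.
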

Another result we need is a limit theorem. Consider a sequence of equations
\begin{equation*}
X^n(t) = X_0 + \int_0^t \Big(a(s,X^n(s)) ds + b(s,X^n(s)) dW(s) + c(s,X^n(s))dZ^n(s)\Big), t\in[0,T],
\end{equation*}
where $\set{Z_n,n\ge 1}$ is a sequence of almost surely $\gamma$-H\"older continuous processes.
\begin{theorem}\label{thm-main}
Assume that $\norm{Z-Z^n}_{0,T}\to 0$ in probability. Then $X^n(t) \to X(t)$ in probability uniformly in $t$.
\end{theorem}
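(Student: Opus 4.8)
The plan is to estimate the difference $U^n := X - X^n$ in the strong norm $\norm{\cdot}_{\infty,T}$, since $\norm{U^n}_{\infty,T}\ge\sup_{t\le T}\abs{X(t)-X^n(t)}$ and hence its smallness yields uniform convergence. Subtracting the equation for $X^n$ from \eqref{main}, and using that both equations share the coefficients $a,b,c$ and the Wiener process $W$, gives
\begin{equation*}
\begin{gathered}
U^n(t) = \int_0^t \big(a(s,X(s))-a(s,X^n(s))\big)\,ds + \int_0^t \big(b(s,X(s))-b(s,X^n(s))\big)\,dW(s)\\
{}+ \int_0^t \big(c(s,X(s))-c(s,X^n(s))\big)\,dZ(s) + \int_0^t c(s,X^n(s))\,d(Z-Z^n)(s).
\end{gathered}
\end{equation*}
The last integral is the only term carrying $Z-Z^n$ and will play the role of a vanishing source; the first three are fed back into $U^n$ through hypotheses (M1)--(M3).

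To control the random prefactors I would localize by \emph{stopping times} rather than by terminal events, so as to keep everything adapted for the martingale step below. Since $\Lambda_{t,\alpha}(Z)$ and $\norm{X}_{\infty,t}$ are nondecreasing, $\cF_t$-measurable, and a.s.\ finite (the latter by Theorem~\ref{thm-solution-mixed}), set $\tau_N=\inf\set{t\le T:\Lambda_{t,\alpha}(Z)\ge N\text{ or }\norm{X}_{\infty,t}\ge N}$ and $\sigma_n=\inf\set{t\le T:\norm{Z-Z^n}_{0,t}\ge1}$ (with $\inf\emptyset=T$), and let $\rho=\tau_N\wedge\sigma_n$. On $[0,\rho]$ all prefactors are bounded by a deterministic $C_N$, and $\pr(\tau_N<T)\to0$ as $N\to\infty$ while $\pr(\sigma_n<T)=\pr(\norm{Z-Z^n}_{0,T}\ge1)\to0$ as $n\to\infty$. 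The drift is estimated by $C\int_0^t\abs{U^n(s)}\,ds$ via (M2); the $dZ$-integral by \eqref{ineq}, where the Lipschitz and $C^1$-type bounds of (M2) turn the integrand increments into increments of $U^n$ plus increments of $X,X^n$ (controlled by $N$ on $[0,\rho]$), yielding a bound of the form $C_N\int_0^t\norm{U^n}_{\infty,s\wedge\rho}\,ds$; and the source by \eqref{ineq} together with the elementary inequality $\Lambda_{t,\alpha}(Z-Z^n)\le C\norm{Z-Z^n}_{0,t}$, immediate from the definition of $D^{1-\alpha}_{b-}$, giving a pathwise bound $C_N\norm{Z-Z^n}_{0,\rho}\le C_N\big(\norm{Z-Z^n}_{0,T}\wedge1\big)$.

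The genuinely delicate term is the It\^o integral $\int_0^t\delta b(s)\,dW(s)$, $\delta b(s)=b(s,X(s))-b(s,X^n(s))$, which carries no pathwise H\"older bound and so cannot be touched by \eqref{ineq}. Here I would switch to second moments: Doob's and the Burkholder--Davis--Gundy inequalities with the Lipschitz bound on $b$ give $\ex{\sup_{u\le t\wedge\rho}\aabs{\int_0^u\delta b\,dW}^2}\le C\int_0^t\ex{\abs{U^n(s\wedge\rho)}^2}\,ds$. The subtle part is the fractional-increment component of $\norm{U^n}_{\infty,\cdot}$, which for the It\^o integral requires estimating $\int_0^s\abs{I(s)-I(z)}(s-z)^{-1-\alpha}\,dz$ in $L^2$; since $\ex{\abs{I(s)-I(z)}^2}^{1/2}\le C(s-z)^{1/2}$, this kernel is integrable precisely because $\alpha<1/2$, which is exactly the standing constraint $\alpha\in(1-\gamma,1/2)$. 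Reconciling these $L^2$ bounds for the $W$-integral with the pathwise bounds for the $Z$-integral is the technical heart of the argument, and it dictates working with second moments of the whole identity.

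Assembling the contributions into a single inequality for $\varphi_n(t)=\ex{\norm{U^n}_{\infty,t\wedge\rho}^2}$ produces a Gronwall-type bound $\varphi_n(t)\le C_N\int_0^t\varphi_n(s)\,ds + C_N\,\ex{\big(\norm{Z-Z^n}_{0,T}\wedge1\big)^2}$; the fractional Gronwall lemma then gives $\varphi_n(T)\le C_N\,\ex{\big(\norm{Z-Z^n}_{0,T}\wedge1\big)^2}$, and the right-hand side tends to $0$ by bounded convergence, as the integrand is bounded by $1$ and tends to $0$ in probability by hypothesis. Thus $\norm{U^n}_{\infty,\rho}\to0$ in probability for each fixed $N$. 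Finally, using $\pr\big(\norm{U^n}_{\infty,T}>\eps\big)\le\pr(\tau_N<T)+\pr(\sigma_n<T)+\pr\big(\norm{U^n}_{\infty,\rho}>\eps\big)$ and letting first $n\to\infty$ and then $N\to\infty$ yields $X^n(t)\to X(t)$ in probability, uniformly in $t$.
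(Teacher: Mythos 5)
Your overall strategy --- localize with stopping times, split $U^n=X-X^n$ into drift, It\^o, $dZ$-difference and source terms, estimate the fractional terms pathwise via \eqref{ineq} and the Wiener term in $L^2$ via Doob/Burkholder--Davis--Gundy, then close with a generalized Gronwall lemma --- is precisely the strategy of the proof the paper relies on (the paper gives no argument of its own for this theorem; it defers to the one-dimensional case in \cite{mbfbm-limit}). But as written there is a genuine gap in the localization. Your $\rho$ caps $\Lambda_{t,\alpha}(Z)$, $\norm{X}_{\infty,t}$ and $\norm{Z-Z^n}_{0,t}$, but not $\norm{X^n}_{\infty,t}$, and the latter is exactly what your own estimates require. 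Writing $f(s)=c(s,X(s))-c(s,X^n(s))$, hypotheses (M2)--(M3) give
$\abs{f(s)-f(z)}\le C\abs{U^n(s)-U^n(z)}+C\abs{U^n(s)}\big((s-z)^\beta+\abs{X(s)-X(z)}+\abs{X^n(s)-X^n(z)}\big)$,
so the kernel $\int_0^s\abs{f(s)-f(z)}(s-z)^{-1-\alpha}\,dz$ appearing in \eqref{ineq} produces $\norm{U^n}_{\infty,s}$ multiplied by the increment part of $\norm{X^n}_{\infty,s}$; the same happens in your source term, where $\int_0^s\abs{c(s,X^n(s))-c(z,X^n(z))}(s-z)^{-1-\alpha}\,dz$ is again controlled only by $\norm{X^n}_{\infty,s}$, which nothing in your $\rho$ bounds. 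If you substitute $\norm{X^n}\le\norm{X}+\norm{U^n}\le N+\norm{U^n}$, the Gronwall inequality becomes quadratic in $\norm{U^n}$ and does not close. The repair is to add the condition $\norm{X^n}_{\infty,t}\ge 2N$ (say) to the definition of $\rho$; but then you owe an additional bootstrap showing that, for fixed $N$, the probability that this new cap is the one triggered tends to $0$ as $n\to\infty$ --- for instance, hitting it before the other caps forces $\norm{U^n}_{\infty,\rho}\ge N$, whose probability is controlled by Chebyshev through the very Gronwall bound you derive. This step is absent from your write-up.

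A second, smaller defect concerns the It\^o term: Minkowski's integral inequality bounds $\ex{\big(\int_0^s\abs{I(s)-I(z)}(s-z)^{-1-\alpha}dz\big)^2}$ for each \emph{fixed} $s$, where $I$ denotes the stochastic integral, but your Gronwall function $\varphi_n(t)=\ex{\norm{U^n}_{\infty,t\wedge\rho}^2}$ carries $\sup_{s\le t}$ \emph{inside} the expectation, and the supremum does not pass through Minkowski. You need either a Garsia--Rodemich--Rumsey or Kolmogorov-type argument giving an a.s.\ H\"older bound for $I$ with a square-integrable random constant (available here because $\alpha<1/2$ leaves room and the integrand is bounded after localization), or you should run Gronwall on the pointwise quantity $\ex{\big(\abs{U^n(t\wedge\rho)}+\int_0^{t}\abs{U^n(t\wedge\rho)-U^n(z\wedge\rho)}(t-z)^{-1-\alpha}dz\big)^2}$ and recover the supremum only in a final pass via Doob's inequality for the martingale part and monotonicity for the rest. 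With these two repairs your plan is sound and coincides with the argument of \cite{mbfbm-limit}.
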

The proofs of Theorems~\ref{thm-solution-mixed} and \ref{thm-main} are exactly the same as those in one-dimensional case given in \cite{mbfbm-limit}.

\section{Main results}

\subsection{Stochastic viability of solution to mixed stochastic differential equation}
We remind the notion of stochastic viability: for a non-empty set $D\subset\R^d$, process
$X=\set{X(t),t\ge0}$ is called viable in $D$ if $\pr(X(t)\in D, t\ge 0)=1$ when $X(0)\in D$.

Assume that $D$ is smooth, i.e.\ there exists a function $\varphi:\R^d\rightarrow\R$, $\varphi\in C^2(\R^d)$ such that its gradient $\partial_x\varphi(x)\neq 0$ when $\varphi(x)=0$ and
$$
D = \set{x: \varphi(x)\ge 0}.
$$
Denote $\partial D = \set{x: \varphi(x)= 0}$.
\begin{theorem}\label{viabilitymixedthm}
Under assumptions
\begin{enumerate}[\rm (VM1)]
\item for any $t\ge 0$, $x\in D$
$$
\left(\varphi'(x), a(t,x)\right)+ \frac12\sum_{k=1}^m \sum_{i,l=1}^d b_{ki}(t,x)b_{kl}(t,x)\varphi''_{il}(x)\ge 0;
$$
\item for any $t\ge 0$, $x\in D$, $k=1,\dots,m$, $j=1,\dots,r$
$$
\left(\varphi'(x), b_k(t,x)\right)=\left(\varphi'(x), c_j(t,x)\right) = 0.
$$
\end{enumerate}
Then $X$ is viable in $D$.
\end{theorem}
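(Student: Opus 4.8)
The plan is to prove viability by approximation, reducing the mixed equation to a sequence of It\^o equations to which the viability theorem for It\^o equations with random drift (proved in the Appendix) applies, and then passing to the limit via Theorem~\ref{thm-main}. The reason for this route is that the integral with respect to $Z$ is a pathwise object rather than a martingale, so a direct application of the It\^o formula to $\varphi(X(t))$ combined with an expectation/penalization argument is unavailable: the $dZ$-term does not vanish in expectation. Smoothing $Z$ turns the $dZ$-term into an ordinary drift and removes this difficulty.

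First I would approximate $Z$ by a sequence of $\mathbb F$-adapted, almost surely continuously differentiable processes $Z^n$ --- for definiteness, a backward moving-average smoothing $Z^n_j(t)=n\int_{(t-1/n)\vee 0}^t Z_j(s)\,ds$ --- chosen so that $\norm{Z-Z^n}_{0,T}\to 0$ in probability. For such $Z^n$ the integral $\int_0^t c_j(s,X^n(s))\,dZ^n_j(s)$ equals $\int_0^t c_j(s,X^n(s))\,\dot Z^n_j(s)\,ds$, so the approximating equation becomes an It\^o stochastic differential equation with random, time-dependent drift $\mu^n(s,x)=a(s,x)+\sum_{j=1}^r c_j(s,x)\dot Z^n_j(s)$ and Brownian diffusion coefficients $b_k$. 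Since for each fixed $n$ the process $\dot Z^n_j$ is almost surely bounded on $[0,T]$ and $a,c_j$ are Lipschitz in $x$, the drift $\mu^n$ satisfies, pathwise, linear growth and a Lipschitz condition in $x$ with almost surely finite constants, so this equation is uniquely solvable.

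The key step is to check that hypotheses (VM1)--(VM2) translate into the hypotheses of the It\^o viability theorem for $X^n$. The tangency condition for the diffusion is $(\varphi'(x),b_k(t,x))=0$ on $D$, which is exactly (VM2). The second-order Nagumo condition for the drift $\mu^n$ reads
$$
\left(\varphi'(x),\mu^n(t,x)\right)+\frac12\sum_{k=1}^m\sum_{i,l=1}^d b_{ki}(t,x)b_{kl}(t,x)\varphi''_{il}(x)\ge 0,\quad x\in D,
$$
and here the crucial observation is that, by (VM2),
$$
\left(\varphi'(x),\sum_{j=1}^r c_j(t,x)\dot Z^n_j(t)\right)=\sum_{j=1}^r \dot Z^n_j(t)\left(\varphi'(x),c_j(t,x)\right)=0,\quad x\in D,
$$
so the contribution of the smoothing drift vanishes and the Nagumo condition reduces precisely to (VM1). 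Consequently the It\^o viability theorem of the Appendix applies to each $X^n$ and yields $\pr(X^n(t)\in D,\ t\in[0,T])=1$, i.e.\ $\varphi(X^n(t))\ge 0$ for all $t\in[0,T]$ almost surely.

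It remains to pass to the limit. Since $\norm{Z-Z^n}_{0,T}\to 0$ in probability, Theorem~\ref{thm-main} gives $X^n(t)\to X(t)$ in probability, uniformly in $t\in[0,T]$; choosing a subsequence along which the convergence is almost sure and uniform, and using that $\varphi$ is continuous and $D=\set{x:\varphi(x)\ge 0}$ is closed, I obtain $\varphi(X(t))=\lim_n\varphi(X^n(t))\ge 0$ simultaneously for all $t\in[0,T]$ almost surely, so $X$ is viable in $D$. The main obstacle, and the place requiring most care, is the construction of the smoothing sequence $Z^n$: it must be $\mathbb F$-adapted (which is why a \emph{backward} smoothing is used), regular enough that $\int c_j\,dZ^n_j$ is an ordinary Lebesgue integral, and convergent to $Z$ in the seminorm $\norm{\cdot}_{0,T}$ demanded by Theorem~\ref{thm-main}; the remaining verification that the approximating equations fall under the random-drift It\^o viability theorem is then routine.
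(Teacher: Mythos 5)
Your overall route is exactly the paper's: an adapted backward moving-average smoothing of the driver, rewriting the approximate equation as an It\^o equation whose extra drift $\sum_j c_j\dot Z^n_j$ is annihilated in the Nagumo condition by (VM2), an appeal to Theorem~\ref{stochviabilityrandcoeffs}, and a passage to the limit via Theorem~\ref{thm-main} together with closedness of $D$. There is, however, one concrete gap. You smooth $Z$ itself, so $\dot Z^n_j(t)=n\big(Z_j(t)-Z_j((t-1/n)\vee 0)\big)$ is bounded only by the random quantity $2n\sup_{s\in[0,T]}\abs{Z_j(s)}$, and consequently your drift $\mu^n$ satisfies (H3)--(H4) only with almost surely finite \emph{random} constants --- as you yourself concede. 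But Theorem~\ref{stochviabilityrandcoeffs} is stated and proved under (H1)--(H4) with a deterministic constant $C$: its proof relies on existence and uniqueness of an adapted square-integrable solution (via \cite{skorokhod}) and on expectation arguments in Step 1, and the case of random constants is only mentioned in the paper as an unproved extension in the remark following that theorem. So, as written, the theorem you invoke does not cover your approximating equations.

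The fix is the device you omitted: truncate before smoothing. The paper sets $k_n(x)=\frac{x}{\abs{x}}(\abs{x}\wedge n)$ and $Z^n(t)=n\int_{(t-1/n)\vee 0}^t k_n(Z(s))\,ds$, so that $\abs{\dot Z^n_j}\le 2n^2$ deterministically and the coefficients $a^n(t,x)=a(t,x)+\sum_j c_j(t,x)\dot Z^n_j(t)$ satisfy (H1)--(H4) with nonrandom constants. The convergence needed for Theorem~\ref{thm-main} survives the truncation: by \cite[Lemma 2.1]{mbfbm-limit}, $\norm{Z^n-k_n(Z)}_{0,T}\le C K_\gamma(k_n(Z))\,n^{1-\gamma-\alpha}$ with $K_\gamma(k_n(Z))\le K_\gamma(Z)<\infty$, while $\norm{Z-k_n(Z)}_{0,T}\to 0$ a.s.\ because $Z$ is a.s.\ bounded; hence $\norm{Z^n-Z}_{0,T}\to 0$ a.s. With this single modification the rest of your argument --- the observation that (VM2) kills the smoothing drift in the Nagumo condition, and the limit passage using closedness of $D$ --- is precisely the paper's proof.
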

\begin{proof}
Since $\norm{X}_{\infty,T}<\infty$ a.s., the process $X$ is a.s.\ continuous, so it is enough to prove that for all $t\in[0,T]$ \ $\pr(X(t)\in D)=1$.

For $x\in \R^d$, $n\ge 1$, we denote $k_n(x) = \frac{x}{\abs{x}}(\abs{x}\wedge n)$,
$$
Z^n(t) = n \int_{( t-1/n)\vee 0}^t k_n(Z_s) ds
$$
and $k_n(Z) = \set{k_n(Z_t),t\in[0,T]}$.

It was proved in \cite[Lemma 2.1]{mbfbm-limit} that
\begin{equation}
\label{ocenkazn-z}
\norm{Z^n-k_n(Z)}_{0,T}\le C K_\gamma(k_n(Z)) n^{1-\gamma-\alpha},
\end{equation}
where $K_\gamma(g) = \sup_{0\le s<t\le T} \abs{g(t)-g(s)}/(t-s)^{\gamma}$ is the H\"older constant of $g$.  Note that
$$
\norm{Z^n-Z}_{0,T}\le \norm{Z^n-k_n(Z)}_{0,T}+ \norm{Z-k_n(Z)}_{0,T}.
$$
Since $Z$ is a.s.\ continuous, it is bounded, and hence  $\norm{Z-k_n(Z)}_{0,T}\rightarrow0$ a.s., $n\rightarrow \infty$. This relation
together with \eqref{ocenkazn-z} and the inequality
$K_\gamma(k_n(Z))\le  K_{\gamma}(Z)<\infty$ gives
$$
\norm{Z^n-Z}_{0,T}\to 0,n\to \infty
$$
a.s. Defining $X^n$ as a solution to
\begin{equation}\label{approx}
X^n(t) = X_0 + \int_0^t \Big(a(s,X^n(s)) ds + b(s,X^n(s)) dW(s) + c(s,X^n(s))dZ^n(s)\Big), t\in[0,T],
\end{equation}
and applying Theorem~\ref{thm-main}, we have that $X^n(t)\to X(t)$ in probability uniformly in $t\in[0,T]$, $n\to\infty$.

Equation \eqref{approx} can be transformed to the It\^o stochastic differential equation
\begin{equation*}
X^n(t) = X_0 + \int_0^t \Big(a^n(s,X^n(s)) ds + b(s,X^n(s)) dW(s),
\end{equation*}
where
\begin{gather*}
a^n(t,x) = a(t,x)+\sum_{j=1}^r c_j(t,x)\dot Z^n_j(t)\\=a(t,x)+\sum_{j=1}^r c_j(t,x) n\big(k_{nj}(Z(t))-k_{nj}(Z((t-1/n)\vee0))\big),
\end{gather*}
and $k_{nj}(x)$ is the $j$th coordinate of $k_n(x)$.

Since $k_n(Z)$ is bounded, the coefficients of this equation satisfy assumptions (H1)--(H4) in Appendix.
It follows from (VM1) and (VM2) that the conditions of Theorem~\ref{stochviabilityrandcoeffs} are satisfied.
Hence, for all $t\in[0,T]$, we have   $\pr(X^n(t)\in D)=1$. Since $X^n(t)\to X(t)$ as $n\to \infty$ in probability and $D$ is closed, we
get  $\pr(X(t)\in D)=1$, as required.
\end{proof}

\subsection{Positivity and comparison theorem}
As the first application of Theorem~\ref{viabilitymixedthm}, we consider conditions supplying positivity of solution to equation \eqref{main}. This question is of particular interest in financial modeling, where the prices of most assets cannot become negative.
\begin{theorem}
Assume that for some $d'\le d$ and any $i=1,\dots,d'$
\begin{enumerate}[\rm (P1)]
\item $X_{0i}\ge 0$.
\item  for any $x\in \R^d$ such that $x_i= 0$, $x_l\ge 0$, $l=1,\dots,d'$, it holds
$a_i(t,x)\ge 0$ and for any $k=1,\dots,m$, $j=1,\dots,r$ \ $b_{ki}(t,x)= c_{ji}(t,x) = 0$.
\end{enumerate}
Then $\pr(X_i(t)\ge 0, i=1,\dots,d', t\in[0,T])=1$.
\end{theorem}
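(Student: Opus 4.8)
The plan is to run a viability argument for the ``partial orthant''
\[
D = \set{x\in\R^d : x_i\ge 0,\ i=1,\dots,d'},
\]
which is closed and, by (P1), contains the starting point $X_0$. Its boundary is the union of the faces $F_i = \set{x\in D : x_i=0}$, and on the interior of $F_i$ the outward normal is $-e_i$; the natural invariance conditions there read $a_i(t,x)\ge 0$ and $b_{ki}(t,x)=c_{ji}(t,x)=0$, which is exactly what (P2) supplies. The catch is that $D$ is not smooth in the sense required by Theorem~\ref{viabilitymixedthm}: at the corners, where several coordinates vanish at once, there is no single $C^2$ function $\varphi$ with nonvanishing gradient cutting out $D$. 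For this reason I would not invoke Theorem~\ref{viabilitymixedthm} as a black box, but rather reproduce the approximation scheme used in its proof.

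Concretely, I would take the smoothed drivers $Z^n$ and the solutions $X^n$ of \eqref{approx} from the proof of Theorem~\ref{viabilitymixedthm}, for which $X^n(t)\to X(t)$ in probability uniformly in $t$ by Theorem~\ref{thm-main}, and rewrite \eqref{approx} as an It\^o equation with diffusion $b$ and drift $a^n(t,x)=a(t,x)+\sum_{j=1}^r c_j(t,x)\dot Z^n_j(t)$. The key observation is that on each face $F_i$ the random perturbation of the drift disappears: since $c_{ji}(t,x)=0$ there by (P2), we get $a^n_i(t,x)=a_i(t,x)\ge 0$, while $b_{ki}(t,x)=0$ as well. Thus the It\^o equation for $X^n$ satisfies, on every face of $D$, precisely the face conditions for invariance of the orthant, and these involve only the coordinate directions $e_i$, so no corner combinations of the coefficients enter. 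Applying the multidimensional It\^o positivity/viability result of the Appendix (of the type of Theorem~\ref{stochviabilityrandcoeffs}) then yields $\pr(X^n(t)\in D)=1$ for every fixed $t\in[0,T]$.

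Finally I would pass to the limit. Since $D$ is closed and $X^n(t)\to X(t)$ in probability, it follows that $\pr(X(t)\in D)=1$ for each fixed $t$; intersecting over rational $t$ and using the a.s.\ continuity of $X$, which holds because $\norm{X}_{\infty,T}<\infty$, upgrades this to $\pr(X_i(t)\ge 0,\ i=1,\dots,d',\ t\in[0,T])=1$. I expect the main obstacle to be exactly the non-smoothness of $D$ at its corners: it blocks the direct use of Theorem~\ref{viabilitymixedthm} and forces one to check, after smoothing $Z$, that the face conditions furnished by (P2) are identical to those required by the multidimensional It\^o result, with the corners imposing no extra constraints. The verification that $a^n_i=a_i$ on $F_i$ and the closedness/continuity argument in the limit are then routine.
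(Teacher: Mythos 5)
Your overall strategy (smooth the driver $Z$, pass to It\^o equations with drift $a^n=a+\sum_j c_j\dot Z^n_j$, apply an It\^o viability result, take limits) mirrors the proof of Theorem~\ref{viabilitymixedthm}, and the computations you actually carry out are fine: on each face $F_i$ one indeed has $a^n_i=a_i\ge 0$ and $b_{ki}=0$, and the closedness/continuity argument in the limit is routine. The genuine gap is the step where you ``apply the multidimensional It\^o positivity/viability result of the Appendix (of the type of Theorem~\ref{stochviabilityrandcoeffs})'' to the orthant $D$. No such result is available: Theorem~\ref{stochviabilityrandcoeffs} carries exactly the same smoothness restriction as Theorem~\ref{viabilitymixedthm} --- the domain must be $\set{\varphi\ge 0}$ for a single $C^2$ function $\varphi$ with non-vanishing gradient on $\set{\varphi=0}$, and its proof hinges on the projection $z(x)$, the bound $\abs{x-z(x)}\le C_R\abs{\varphi(x)}$, and the It\^o formula for $\varphi(X)$. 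For $d'\ge 2$ the orthant has corners and admits no such $\varphi$, so the corner difficulty you correctly identified at the mixed level reappears, unresolved, at the It\^o level: you have moved the problem, not solved it. Nor can you patch this by applying Theorem~\ref{stochviabilityrandcoeffs} to each half-space $D_i=\set{x_i\ge 0}$ (which is smooth) and intersecting the resulting almost sure events: hypothesis (P2) gives $a_i\ge 0$ and $b_{ki}=c_{ji}=0$ only on the face $\set{x_i=0,\ x_l\ge 0,\ l=1,\dots,d'}$, not on the whole hyperplane $\set{x_i=0}$, so the hypotheses for viability in a single half-space are simply not satisfied at points where some other coordinate $x_l$, $l\le d'$, is negative.

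The paper closes exactly this gap with a reflection trick, and this is the idea missing from your proposal. One considers the auxiliary equation \eqref{ne main}, whose coefficients are the original ones evaluated at $\big(\abs{x_1},\dots,\abs{x_{d'}},x_{d'+1},\dots,x_d\big)$. Since the reflected point always lies in the face, (P2) now yields the boundary conditions on the \emph{entire} hyperplane $\set{x_i=0}$ for every $i\le d'$, so Theorem~\ref{viabilitymixedthm} applies to the reflected equation separately in each smooth half-space $D_i$; no re-derivation of the smoothing/approximation scheme is needed, the theorem is used as a black box. Intersecting over $i=1,\dots,d'$ shows that the solution $\wt X$ of \eqref{ne main} stays in the orthant a.s.; there the absolute values are redundant, so $\wt X$ also solves \eqref{main}, and the uniqueness from Theorem~\ref{thm-solution-mixed} gives $X=\wt X$, hence the positivity. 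To repair your proof you must either prove an orthant version of Theorem~\ref{stochviabilityrandcoeffs} from scratch or insert this reflection step; as written, the proposal is incomplete.
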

\begin{proof}
Consider the equation
\begin{equation}\begin{gathered}
\label{ne main}
\wt X(t) = X_0 + \int_0^t \Big(\wt a(s,X(s)) ds + \sum_{k=1}^m \wt b_k(s,X(s)) dW_k(s) + \sum_{j=1}^r \wt c_j(s,X(s))dZ_j(s)\Big),\\ t\in[0,T],
\end{gathered}\end{equation}
where
\begin{gather*}
\wt a(t,x) = a\big(t,\abs{x_1},\dots,\abs{x_{d'}},x_{d'+1},\dots,x_d\big),\\ \wt b(t,x) = b\big(t,\abs{x_1},\dots,\abs{x_{d'}},x_{d'+1},\dots,x_d\big),\\
\wt c(t,x) = c\big(t,\abs{x_1},\dots,\abs{x_{d'}},x_{d'+1},\dots,x_d\big).
\end{gather*}
It follows from Theorem~\ref{viabilitymixedthm} that the unique solution $\wt X$ to equation \eqref{ne main} is viable in each
$D_i = \set{x_i\ge 0}$, $i=1,\dots,d'$.

Consequently, under (P1), we have $\pr(\wt X_i(t)\ge 0, t\in[0,T])=1$ for $i=1,\dots,d'$, or, equivalently,
$\pr(\wt X_i(t)\ge 0, i=1,\dots,d', t\in[0,T])=1$. Hence, $\wt X(t)$ solves \eqref{main}, and, by
uniqueness, we have $\pr(X(t) = \wt X(t),t\in[0,T])=1$, concluding the proof.
\end{proof}

Now we turn to a comparison theorem for solutions of stochastic differential equations.

Let $l\in\set{1,\dots,d}$ be a fixed coordinate number and $X^q$, $q=1,2$, be solutions to mixed equations
\begin{equation}\begin{gathered}
X^q(t) = X^q_0 + \int_0^t \Big(a^q(s,X^q(s)) ds +\sum_{k=1}^m b_k(s,X^q_l(s)) dW_k(s) + \sum_{j=1}^rc_j(s,X^q_l(s))dZ_j(s)\Big),\\ t\in[0,T].
\end{gathered}\end{equation}
The coefficients of both equations are assumed to satisfy hypotheses from Subsection 1.2.

We note that the coefficients $b$ and $c$ are the same for both equations and they depend on the $l$th coordinate only. This
assumption seems restrictive, and we discuss it in Remark~\ref{onedimcomparrem}.

The following result can be deduced from Theorem~\ref{viabilitymixedthm} exactly the same way as
Theorem~\ref{compartheorem} is deduced from Theorem~\ref{stochviabilityrandcoeffs}.
\begin{theorem}\label{compartheorem-mixed}
Assume that
\begin{enumerate}[\rm (CM1)]
\item $X^1_{0l}\le X^2_{0l}$.
\item  for any $x^1,x^2\in \R^d$ such that $x^1_l= x^2_l$ it holds
$a^1(t,x^1)\le  a^2(t,x^2)$.
\end{enumerate}
Then $\pr(X^1_l(t)\le X^2_l(t), t\in[0,T])=1$.
\end{theorem}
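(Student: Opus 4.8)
The plan is to deduce the statement from the viability theorem (Theorem~\ref{viabilitymixedthm}), applied to the stacked process $Y=(X^1,X^2)$ in $\R^{2d}$. First I would observe that $Y$ solves a single mixed equation of the form \eqref{main} in $\R^{2d}$, driven by the same $W$ and $Z$, with coefficients $A(s,y)=\big(a^1(s,x^1),a^2(s,x^2)\big)$, $B_k(s,y)=\big(b_k(s,x^1_l),b_k(s,x^2_l)\big)$ and $C_j(s,y)=\big(c_j(s,x^1_l),c_j(s,x^2_l)\big)$ for $y=(x^1,x^2)$. Since $a^q,b_k,c_j$ satisfy (M1)--(M3), so do $A,B_k,C_j$ (they are concatenations of the original coefficients, and the $B_k,C_j$ depend on a single coordinate of $y$); thus Theorem~\ref{thm-solution-mixed} guarantees that $(X^1,X^2)$ is the unique solution of this $\R^{2d}$-equation, and the viability theorem is applicable to it.

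Next I would introduce the smooth domain encoding the desired ordering. Set $\varphi(y)=x^2_l-x^1_l$; this is linear, hence $\varphi\in C^2(\R^{2d})$, with constant gradient carrying $-1$ in the slot of $x^1_l$ and $+1$ in the slot of $x^2_l$, so $\varphi'(y)\neq 0$ and $D=\set{y:\varphi(y)\ge 0}$ is an admissible smooth set with $\partial D=\set{y:x^1_l=x^2_l}$. By (CM1) we have $Y(0)=(X^1_0,X^2_0)\in D$. The goal thereby becomes showing that $Y$ is viable in $D$, which is exactly $\pr\big(X^1_l(t)\le X^2_l(t),\,t\in[0,T]\big)=1$.

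The heart of the argument is the verification of (VM1)--(VM2) on the diagonal $\partial D$ (as in the proof of the preceding positivity theorem, the relevant relations need only be checked where $\varphi=0$). Because $b_k$ and $c_j$ are common to both equations and depend only on the $l$th coordinate, on $\partial D$ we have $x^1_l=x^2_l$, hence $b_{kl}(s,x^1_l)=b_{kl}(s,x^2_l)$ and $c_{jl}(s,x^1_l)=c_{jl}(s,x^2_l)$; consequently $(\varphi'(y),B_k(s,y))=b_{kl}(s,x^2_l)-b_{kl}(s,x^1_l)=0$ and likewise $(\varphi'(y),C_j(s,y))=0$, which is (VM2). Since $\varphi$ is linear its Hessian vanishes, so the second-order (It\^o) term in (VM1) drops out and (VM1) reduces to $(\varphi'(y),A(s,y))=a^2_l(s,x^2)-a^1_l(s,x^1)\ge 0$, which on $\partial D$ is precisely hypothesis (CM2). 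Invoking Theorem~\ref{viabilitymixedthm} then yields viability of $Y$ in $D$, and hence the claim.

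I expect the main obstacle to be arranging the reduction so that the viability hypotheses are genuinely checkable: the tangency condition (VM2) can hold only where the two diffusions coincide, i.e.\ on the diagonal, and it is exactly the restrictive structural assumption that $b$ and $c$ are shared and one-dimensional in their dependence that makes $(\varphi',B_k)$ and $(\varphi',C_j)$ vanish there. Confirming that the stacked coefficients inherit (M1)--(M3) (in particular the differentiability and Lipschitz conditions on the $C_j$) is routine, and the passage from $\pr\big(X^2_l(t)-X^1_l(t)\ge 0\big)$ for fixed $t$ to the pathwise statement uniform in $t$ is supplied by the a.s.\ continuity of $Y$ already embedded in the viability theorem.
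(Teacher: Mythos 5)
Your proposal is correct and is essentially identical to the paper's own argument: the paper proves Theorem~\ref{compartheorem-mixed} by exactly this stacking device, taking $Y=(X^1,X^2)\in\R^{2d}$, $\varphi(y)=x^2_l-x^1_l$, $D=\set{x^1_l\le x^2_l}$, and invoking the viability theorem (this is the proof of Theorem~\ref{compartheorem} in Appendix~B, which the paper states carries over verbatim to the mixed setting). Your remark that (VM1)--(VM2) need only be checked on $\partial D$, where the shared, $l$th-coordinate-dependent $b_k,c_j$ make the tangency condition hold, matches how the paper itself applies its viability theorems.
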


\section{Applications}
To apply a pathwise comparison theorem to the mixed financial market model, consider the following one-dimensional pure and mixed analogs of the Cox-Ingersoll-Ross market price model, i.e.\ the stochastic differential equations of the form
\begin{equation}\label{cir-pure}
dX(t)=aX(t)dt + \sigma X(t)^\lambda dB^H(t), t\geq 0, X(0)=X_0>0
\end{equation}
and
\begin{equation}\label{cir-mixed}
dX(t)=aX(t)dt + \sigma X(t)^\lambda(dW(t)+dB^H(t)), t\geq 0, X(0)=X_0>0
\end{equation}
with $1/2\leq \lambda<1$.
The diffusion coefficient $b(s,x)=c(s,x)=\sigma x^\lambda$ in  both equations  is not differentiable at zero and therefore does not satisfy  $(M2)$. Therefore,  we can not apply Theorem 1.1 as well as any other known results concerning the existence and uniqueness of the solution of mixed stochastic differential equations involving fractional Brownian motion,  to  \eqref{cir-pure} and \eqref{cir-mixed}.

Suppose for the moment that equation \eqref{cir-pure} has a solution $X$. Consider the integral $\int_0^t X(s)^\lambda dB^H(s)$. According to the results concerning the pathwise H\"{o}lder properties of the integrals with respect to fBm (for example, see \cite{FeyelPradelle}), this integral is H\"{o}lder continuous in $t$ up to order $H$. It follows that the trajectories of $X$ have the same property. So,  the process $X^\lambda$ has H\"{o}lder continuous trajectories of order up to $H\lambda$. In turn, the integral $\int_0^tX(s)^\lambda dB^H(s)$ exists as a path-wise Riemann-Stieltjes integral if the sum of the H\"{o}lder exponents of the integrand and the integrator exceeds $1$ (see, e.g., \cite{Zahle}), i.e.\ if $H\lambda+H>1$. This heuristical argument explains why we consider equation \eqref{cir-pure} only for the values of index $H\in (\frac{1}{1+\lambda}, 1)$. Similarly, the solution of equation \eqref{cir-mixed} has H\"older continuous trajectories up to order $1/2$, and  the process $X^\lambda$ has H\"older continuous trajectories up to order $\frac{\lambda}{2}$, and hence, it must be $\frac{\lambda}{2}+H>1$, or $H\in(1-\frac{\lambda}{2}, 1)$. In particular, if $\lambda=1/2$, then $H\in(2/3, 1)$ for equation \eqref{cir-pure}, and $H\in(3/4, 1)$ for equation  \eqref{cir-mixed}.

Consider \eqref{cir-mixed} on some  interval $[0,T]$ assuming additionally that the processes $W$ and $B^H$ are independent. According to \cite{Cheridito},  there exists a Wiener process $\widetilde{W}$ with respect to the filtration generated by the sum $W+B^H$, such that \begin{equation}
\label{eq9} W(t)+B^H(t) =\widetilde{W}(t) -\int_0^t
{\int_0^s {r \left( {s,u} \right)d\widetilde{W}(u) ds} },
\end{equation}
where the square integrable kernel $\set{r(t,s),0\le s< t\le T}$ is the unique solution of the equation
\begin{equation}
\label{eq8} r\left( {t,s} \right)+\int_0^s
{r\left( {t,x} \right)r\left( {s,x}
\right)dx={ H}\left( {2{ H}-1} \right)
(t-s)^{2{ H}-2}} , \quad 0\le s< t\le T.
\end{equation}
In this case, equation \eqref{cir-mixed} can be reduced to the stochastic differential equation
\begin{equation}\label{cir-mixed-trans}\begin{gathered}
dX(t)=\left(aX(t)-\sigma {X(t)^{\lambda}}\int_0^t{r \left( {t,u} \right)d{\widetilde{W}}(u)}\right)dt + \sigma{X(t)^{\lambda}}d\widetilde{W}(t),\\ t\geq 0, X(0)=X_0>0.
\end{gathered}\end{equation}
Equation \eqref{cir-mixed-trans} does not involve a fractional Brownian motion but it has a random non-Lipschitz and non-Markov drift coefficient.
\begin{theorem}\label{ex-uniq}
\begin{enumerate}[\rm (I)]
\item Equation  \eqref{cir-pure}  has a unique solution if $H\in (\frac{1}{1+\lambda}, 1)$.
\item Equation  \eqref{cir-mixed}  has a unique solution if $H\in (1-\frac{\lambda}{2}, 1)$.
\item Denote $\nu_0=\inf\{t>0: X(t)=0\}$, where $X$ is a solution to either  \eqref{cir-pure} or  \eqref{cir-mixed}.
Then $X(t)= 0$ a.s.\ for all $t\geq \nu_0$.
\end{enumerate}
\end{theorem}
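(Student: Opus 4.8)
The plan is to exploit that the only failure of (M2) is at $x=0$: on every interval $[1/n,\infty)$ the coefficient $\sigma x^\lambda$ is smooth with bounded derivatives, so the local theory applies after a Lipschitz truncation. For each $n$ I would replace $\sigma x^\lambda$ by a function $b_n$ that agrees with $\sigma x^\lambda$ on $[1/n,\infty)$, vanishes at $0$, and satisfies (M1)--(M3); solving the truncated equation (by Theorem~\ref{thm-solution-mixed} in the pathwise case) and stopping at the first time $\tau_n$ the solution reaches $1/n$ gives, by uniqueness, a consistent family whose limit is a solution on $[0,\nu_0)$, where $\nu_0=\lim_n\tau_n=\inf\{t:X(t)=0\}$. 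Positivity on $[0,\nu_0)$ is then automatic, and since $a$, $b$ and $c$ all vanish at $0$, putting $X\equiv0$ on $[\nu_0,T]$ produces a global solution; the stated restriction on $H$ is precisely the condition from the heuristic above that makes the defining integral $\int_0^t X^\lambda\,dB^H$ well defined.

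For the pure equation \eqref{cir-pure} I would make existence explicit through the substitution $Y=X^{1-\lambda}$. On $\{X>0\}$ the chain rule for the generalized Lebesgue--Stieltjes integral, valid because $H>1/2$, turns \eqref{cir-pure} into the \emph{linear} fractional equation
\begin{equation*}
dY(t)=(1-\lambda)a\,Y(t)\,dt+(1-\lambda)\sigma\,dB^H(t),\qquad Y(0)=X_0^{1-\lambda},
\end{equation*}
which has a unique global solution given by variation of constants. Setting $\nu_0=\inf\{t:Y(t)=0\}$, $X=Y^{1/(1-\lambda)}$ on $[0,\nu_0)$ and $X\equiv0$ afterwards gives the solution, and uniqueness up to $\nu_0$ follows from the uniqueness of the linear equation together with the truncation argument of the first paragraph.

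The absorption statement (III) is the heart of the matter, and I expect it to be the main obstacle. In the pure case the substitution settles it cleanly: if $X$ left $0$ along an excursion $(\alpha,\beta)$ with $X(\alpha)=0$, then on $(\alpha,\beta)$ the process $Y=X^{1-\lambda}$ would solve the linear equation above with $Y(\alpha)=0$, forcing
\begin{equation*}
Y(t)=(1-\lambda)\sigma\int_\alpha^t e^{(1-\lambda)a(t-s)}\,dB^H(s);
\end{equation*}
but this fractional Ornstein--Uhlenbeck process started from $0$ changes sign in every right neighbourhood of $\alpha$ by the oscillation of the $B^H$-paths, contradicting $Y=X^{1-\lambda}\ge0$. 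Hence no excursion can occur and $X\equiv0$ on $[\nu_0,T]$.

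For the mixed equation \eqref{cir-mixed} I would first pass, via the Cheridito representation \eqref{eq9}--\eqref{eq8}, to the It\^o equation \eqref{cir-mixed-trans} with random drift, to which the truncation scheme of the first paragraph again yields a nonnegative solution, unique up to $\nu_0$ (the truncated solutions are kept nonnegative by the random-coefficient viability result of the Appendix). The obstacle is again absorption: here the substitution $Y=X^{1-\lambda}$ produces an extra It\^o term $-\tfrac12\lambda(1-\lambda)\sigma^2 Y^{-1}\,dt$, a singular Bessel-type drift, so the excursion argument is unavailable. Instead I would study \eqref{cir-mixed-trans} directly near $0$: the diffusion $\sigma X^\lambda$ is H\"older of order $\lambda\ge1/2$ and both it and the drift vanish at $0$, so a Yamada--Watanabe-type local estimate together with the nonnegativity of $X$ should force a solution reaching $0$ to stay there. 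Controlling the random, non-Lipschitz part $-\sigma X^\lambda\int_0^t r(t,u)\,d\widetilde W(u)$ of the drift in that estimate is the step I expect to be the most delicate.
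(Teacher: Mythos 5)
Your substitution $Y=X^{1-\lambda}$ for the pure equation \eqref{cir-pure} is a genuinely different, and cleaner, route to existence and uniqueness up to the hitting time of zero: the paper only uses this change of variables later, in Section 3 (equation \eqref{dYt}), for the option bound, while its proof of Theorem \ref{ex-uniq} proceeds by smoothing the diffusion coefficient below level $\varepsilon$ (the explicit $d_\varepsilon$), stopping at $\tau_\varepsilon$, and then deriving the a priori bound \eqref{ineq7} on $\mathsf{E}\left[\sup_{z\le t}X(z\wedge\theta)\right]$ and on the fractional seminorm via \eqref{ineq} and a generalized Gronwall lemma. That estimate is not a technicality: it is what guarantees that $X$ and the pathwise integral $\int_0^t X^\lambda\,dB^H$ stay bounded and continuous up to $\sup_\varepsilon\tau_\varepsilon$, so that $X$ extends continuously to $\nu_0$ with value $0$ and the pasting $X\equiv 0$ afterwards really yields a global solution. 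Your first paragraph asserts exactly this pasting step without proof; note that $X(\tau_n)=1/n\to 0$ does not by itself prevent $X$ (or the H\"older norms controlling its fBm integral) from oscillating or blowing up as $t\uparrow\nu_0$. Your substitution supplies the missing control in the pure case, but in the mixed case you replace it by the Cheridito representation \eqref{eq9}, and this is a real gap: that representation exists only for $H>3/4$ (it is equivalent to $W+B^H$ being a semimartingale, as the paper itself acknowledges when discussing positivity), and it also requires $W$ and $B^H$ to be independent. Part (II) claims the range $H\in(1-\frac{\lambda}{2},1)$, which for every $\lambda>1/2$ contains values $H\le 3/4$; the paper covers that range precisely because it treats the mixed equation by the same direct pathwise scheme, never invoking \eqref{eq9} or \eqref{cir-mixed-trans} in the proof.

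Part (III) is also not settled by your sketch. In the pure case, your excursion argument needs the fractional Ornstein--Uhlenbeck process to change sign in every right neighbourhood of the \emph{random} time $\alpha$ at which an excursion away from zero would begin. Almost sure oscillation of $B^H$ at each fixed time does not transfer to all (uncountably many, path-dependent) random times simultaneously; what you need is a statement of Dvoretzky--Erd\H{o}s--Kakutani type (no points of increase) for fBm, which is a nontrivial theorem, not an obvious oscillation property. In the mixed case you say yourself that the key Yamada--Watanabe-type step is open. By contrast, the paper's argument for (III) conditions on the event $A$ that $X$ reaches a level $\rho>0$ after $\nu_0$, passes to the probability space $\widetilde\Omega=A$, restarts at $\nu_\rho$, and reruns the uniqueness argument to reach a contradiction with the zero solution; it is terse, but it deliberately avoids any fine path property of $B^H$. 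Finally, note that global uniqueness in (I) and (II) rests on (III) in both your plan and the paper's, so the gap in (III) propagates back: as written, your proposal establishes (I) up to $\nu_0$, establishes (II) only for $H>3/4$ and independent $W$, $B^H$, and does not establish (III).
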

\begin{proof}
The proof of this theorem is given in Appendix C.
\end{proof}

Consider the one-dimensional  mixed  Cox-Ingersoll-Ross interest rate  model described by equation \eqref{cir-mixed}.  We suppose that the discounting factor   equals $B(t)=e^{at}, t\geq 0$. It is impossible to solve explicitly equation \eqref{cir-mixed} and the distribution of its solution has a complicated form. We note that even for the classical Cox-Ingersoll-Ross interest rate model, involving only Wiener process,  it has a noncentral chi-square distribution according to \cite{CIR} (see also a discussion in \cite{ioffe}). To avoid such difficulties, we apply Theorems~\ref{ex-uniq} and \ref{compartheorem-mixed} to get an upper bound for option prices on interest rate in the model described by equation~\eqref{cir-mixed}. To this end, introduce   
$Y(t)=X(t)^{1-\lambda}, t\in[0,\nu_0)$. According to the It\^{o} formula, $Y$ satisfies the following stochastic differential equation on $[0,\nu_0)$:
\begin{equation}\label{dYt}
dY(t)=\Big(a(1-\lambda)Y(t)-\frac{\sigma^2\lambda(1-\lambda)}{2 Y(t)}
\Big)dt+\sigma(1-\lambda)\big(dW(t)+dB^H(t)\big).
 \end{equation}
Consider an auxiliary process $Z$ that satisfies the following stochastic differential equation (Vasi\'{c}ek model)
 \begin{equation}\label{auxiliary}\begin{gathered}
 dZ(t)=a(1-\lambda)Z(t)dt+\sigma(1-\lambda)\big(dW(t)+dB^H(t)\big).
\end{gathered}\end{equation}
with the initial condition $Z(0) = X_0^{1-\lambda}$.
Equation \eqref{auxiliary} has a unique solution on $\R_+$, and it  is a Gaussian process of the  form
\begin{gather*}
Z(t)= X_0^{1-\lambda}
e^{a(1-\lambda)t}+\sigma(1-\lambda)\int_0^te^{a(1-\lambda)(t-s)}\big(dW(s)+dB^H(s)\big).
\end{gather*}
Applying Theorem \ref{compartheorem-mixed} to $Y$ and $Z$ on the interval $[0,\nu_0)$, we get $Y(t)\leq Z(t)$ a.s. for all $t\in[0,\nu_0)$. Since $Y(t)=0$ for $t\ge \nu_0$, this implies  
$Y(t) \leq Z(t)_+$ for all $t\ge 0$.  Therefore, $X(t)\leq Z(t)_+^{1/(1-\lambda)}$ a.s.\ for $t\geq 0$. Considering a European option with a nondecreasing payoff function $f(\cdot)$ and maturity $T>0$, we can use the latter inequality to derive the following upper price bound:
\begin{gather*}
\ex{e^{-aT}f(X_T)}\leq \ex{e^{-aT}f\big(Z(T)_+^{{1}/(1-\lambda)}\big)}
\end{gather*}
where the distribution of $Z(T)$ is  Gaussian with mean ${X_0^{1-\lambda}}e^{a(1-\lambda)T}$ and variance \begin{gather*}
\sigma^2(1-\lambda)^2\bigg(\frac{e^{2a(1-\lambda)T}-1}{2a(1-\lambda)}+H(2H-1)\int_0^T\int_0^T e^{a(1-\lambda)(t+s)}|t-s|^{2H-2}dt\,ds\bigg).
\end{gather*}

For $a=0.1, X_0=1, \lambda=0.5, T = 10, f(x) = (x-K)_+$ we have calculated 
the upper bound for different $\sigma$ and $K$. We have also computed numerically the prices by using the Euler method with $4096$ points to simulate $20000$ paths of the solution. Results are following.

\begin{table}[h]\begin{center}
\begin{tabular}{|c|c|c|c|c|c|c|c|c|c|}
\hline
$\sigma$ &\multicolumn{3}{|c|}{$0.1$}&\multicolumn{3}{|c|}{$0.5$}&\multicolumn{3}{|c|}{$1$}\\
\hline
$K$ & 0.5 & 1 & 2& 0.5 & 1 & 2& 0.5 & 1 & 2\\
\hline
Price &  0.8818  &  0.7015 &  0.4032&  2.17  &  2.04 & 1.891 & 4.448 & 4.42& 4.312 \\
\hline
Upper bound &  0.8953  &  0.7198 & 0.4192 &  2.55 &  2.434  & 2.223 & 6.552 & 6.448 & 6.25  \\
\hline
\end{tabular}
\end{center}
\end{table}

As we see, the difference between the upper bound and the price is minor for small values of $\sigma$ and becomes substantial when $\sigma$ increases. There 
are two reasons for that: first, the term we have omitted in \eqref{dYt} to obtain the upper bound grows with $\sigma$, second, for large $\sigma$ the solution hits zero quickly and stays there, while the upper bound becomes larger. 

It is of great interest whether or not solutions of equations \eqref{cir-mixed} and \eqref{cir-pure} stay positive. For equation \eqref{cir-mixed} with $H>3/4$ a complete answer can be given. Namely, in this case the sum $W+B^H$ under equivalent measure trasformation becomes a standard Wiener process, so equation \eqref{cir-mixed} transforms to an It\^{o} equation. Its solution is a diffusion process, 
so it can be checked by using the results of \cite[Section VI.3]{iwbook} that the solution almost surely vanishes within a finite time. 

For equation \eqref{cir-pure}, the precise answer is not known. The numerical experiments below suggest that for $a<0$ the solution vanishes almost surely, and for $a>0$ it stays positive at least with positive probability. (The Euler method we have used for simulations cannot give and answer whether the probability is $1$ or less.)

We have considered equations
\begin{equation*}
dX_t = a X_t dt + \sqrt{X_t}dB^H_t,
 X_0 = 1,t\ge 0,
 \end{equation*}
with $ H = 0.8$ and $a=\pm 1$.
For $a=0.1$, the following figures contain histogram of the time when trajectories hit $0$ on the segments $[0,50]$ and $[0,500]$.

\noindent\includegraphics[width=6cm]{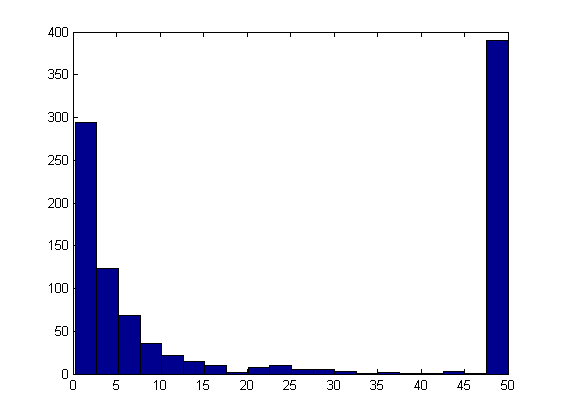}
\includegraphics[width=6cm]{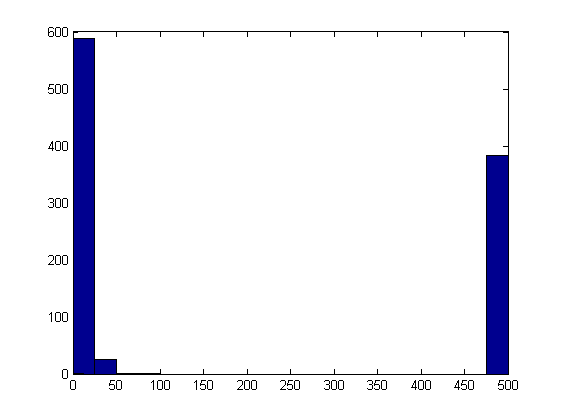}

We see that almost the same number of paths (around $400$) stays positive at $t=50$ and at $t=500$. 

For $a=-0.1$, the following figures contain the graph of the 1000 paths and histogram of the time when trajectories hit $0$.

\noindent\includegraphics[width=6cm]{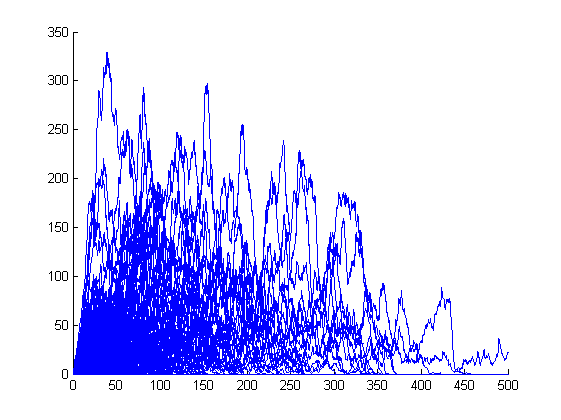}
\includegraphics[width=6cm]{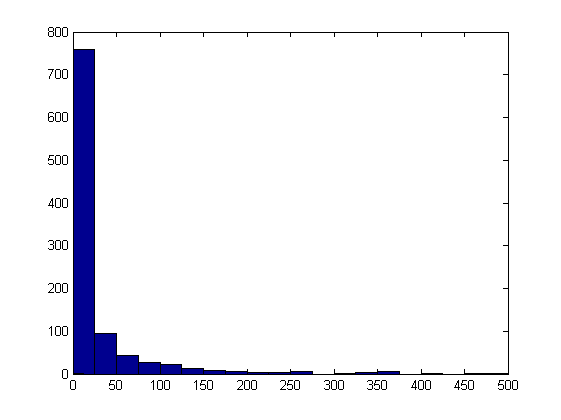}

We see that only a signle path stays positive at $t=500$.

\appendix
\section{Stochastic viability for ordinary stochastic differential equations with random coefficients}

Consider a stochastic differential equation
\begin{equation}
\label{sderandcoeff}
X(t) = X_0 + \int_0^t \Big(a(s,X(s),\omega)ds + \sum_{k=1}^m b_k(s,X(s),\omega)dW_k(s)\Big),\quad t\in\R_+
\end{equation}
where the coefficients $a$ and $b_k$, $k=1,\dots,m$, are measurable functions from $\R_+\times \R^d\times\Omega$
to $\R^d$; $W_k$, $k=1,\dots,m$ are independent $\mathbb F$-Wiener processes, $X_0$ is $\mathcal F_0$-measurable random
vector in $\R^d$. We assume for simplicity that $X_0$ is bounded.

The following assumptions guarantee that there exists a unique solution to \eqref{sderandcoeff} in the class of
adapted square integrable processes (see \cite{skorokhod}).
\begin{enumerate}[(H1)]
\item For each $t\in\R_+$ the functions $a(t,\cdot)$ and $b_k(t,\cdot)$, $k=1,\dots,m$, are $\cB(\R^d)\otimes \cF_t$-measurable.
\item Functions $a(\cdot,\omega)$ and $b_k(\cdot,\omega)$, $k=1,\dots,m$, are almost surely jointly continuous on $\R_+\times \R^d$.
\item For any $t\in\R_+$ and any $x\in\R$  $$\abs{a(t,0)} + \sum_{k=1}^m\abs{b_k(t,0)}\le C.$$
\item For any $t\in\R_+$,
 $x,y\in\R^d$ $$\abs{a(t,x)-a(t,y)} + \sum_{k=1}^m\abs{b_k(t,x)-b_k(t,y)}\le C\abs{x-y}.$$
\end{enumerate}

Let $\varphi:\R^d\rightarrow\R$,  $\varphi\in C^2(\R^d)$ be a function such that its gradient $\partial_x \varphi(x)\neq 0$ when $\varphi(x)=0$. Let the set
$$
D = \set{x: \varphi(x)\ge 0}
$$
be non-empty, and $\partial D = \set{x: \varphi(x)= 0}$ be its boundary.

\begin{theorem}\label{stochviabilityrandcoeffs}
Assume that $X_0\in D$ a.s.\ and the following conditions hold almost surely:
\begin{enumerate}[\rm (V1)]
\item for any $t\in\R_+$, $x\in D$
$$
\alpha(t,x):= \left(\varphi'(x), a(t,x)\right)+ \frac12\sum_{k=1}^m \sum_{i,l=1}^d b_{ki}(t,x)b_{kl}(t,x)\varphi''_{il}(x)\ge 0;
$$
\item for any $t\in\R_+$, $x\in D$, $k=1,\dots,d$
$$
\beta_k(t,x):= \left(\varphi'(x), b_k(t,x)\right)= 0.
$$
\end{enumerate}
Then $\pr(X(t)\in D\text{ for all }t\in\R_+)=1$.
\end{theorem}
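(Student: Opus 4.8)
The plan is to track the scalar process $Y(t):=\varphi(X(t))$ and to show that it never becomes negative. First I would apply It\^o's formula to $\varphi(X(t))$, which, since $\varphi\in C^2$ and $dX=a\,dt+\sum_k b_k\,dW_k$, yields
\[
d\varphi(X(t)) = \alpha(t,X(t))\,dt + \sum_{k=1}^m \beta_k(t,X(t))\,dW_k(t),
\]
with exactly the $\alpha$ and $\beta_k$ appearing in (V1) and (V2): the cross-variation $d\langle X_i,X_l\rangle=\sum_k b_{ki}b_{kl}\,dt$ reproduces the second-order part of $\alpha$, and the martingale part carries the coefficients $\beta_k$. The idea is then to test this dynamics against a penalty function $f\in C^2(\R)$ chosen so that $f\equiv 0$ on $[0,\infty)$, $f>0$, $f'\le 0$, $f''\ge 0$ on $(-\infty,0)$, and so that $\abs{f'(y)}\abs{y}\le C f(y)$ and $f''(y)y^2\le C f(y)$ for $y<0$; a convenient choice is $f(y)=(y^-)^3$, which is $C^2$. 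Applying It\^o to $f(Y(t))$ produces a drift $f'(Y)\alpha+\tfrac12 f''(Y)\sum_k\beta_k^2$ supported on $\{Y<0\}$, plus a local martingale.

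The crux is to control this drift on $\{Y<0\}$ using only (V1) and (V2), which hold on $D$. Since $\beta_k(t,\cdot)$ vanishes on $D\supset\partial D$ by (V2) and the coefficients are locally Lipschitz (from (H4) together with $\varphi\in C^2$), while $\partial_x\varphi\neq 0$ on $\partial D$ forces $\mathrm{dist}(x,\partial D)\le C\abs{\varphi(x)}$ in a neighborhood of $\partial D$, I would derive the linear bound $\abs{\beta_k(t,x)}\le C\abs{\varphi(x)}$ there. Likewise, decomposing $\alpha(t,x)=\big(\alpha(t,x)-\alpha(t,\bar x)\big)+\alpha(t,\bar x)$ with $\bar x$ the projection of $x$ onto $\partial D$, and using $\alpha(t,\bar x)\ge 0$ from (V1) plus local Lipschitz continuity of $\alpha$, I would obtain $\alpha(t,x)\ge -C\abs{\varphi(x)}$ near $\partial D$. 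With $f(y)=(y^-)^3$ these two estimates give, on the tube $\{-\delta<\varphi<0\}$, the pointwise bound $f'(Y)\alpha+\tfrac12 f''(Y)\sum_k\beta_k^2\le C' f(Y)$.

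To make these neighborhood estimates usable I would localize by the stopping time $\tau=\tau_\delta\wedge\tau_N$, where $\tau_\delta=\inf\{t:\varphi(X(t))\le -\delta\}$ and $\tau_N=\inf\{t:\abs{X(t)}\ge N\}$, choosing $\delta=\delta_N$ so small that the tube $\{x\in\bar B_N:-\delta\le\varphi(x)\le 0\}$ lies in the region where the linear bounds hold with a uniform constant. On $[0,\tau]$ the integrand $f'(Y)\sum_k\beta_k$ is bounded, so the local martingale is a true martingale; taking expectations and using the drift bound gives $\ex{f(\varphi(X(t\wedge\tau)))}\le C'\int_0^t \ex{f(\varphi(X(s\wedge\tau)))}\,ds$. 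Since $X_0\in D$ gives $f(\varphi(X_0))=0$, Gronwall's lemma forces $\ex{f(\varphi(X(t\wedge\tau)))}=0$, hence $\varphi(X(t\wedge\tau))\ge 0$ for all $t$ a.s. In particular $\varphi(X(\tau_\delta))\ge 0$ is incompatible with $\varphi(X(\tau_\delta))=-\delta$, so $\tau_\delta\ge\tau_N$, i.e.\ $\varphi(X(t))\ge 0$ on $[0,\tau_N]$; letting $N\to\infty$ and using non-explosion of $X$ yields $\pr(\varphi(X(t))\ge 0\text{ for all }t)=1$.

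I expect the main obstacle to be the drift estimate $\alpha(t,x)\ge -C\abs{\varphi(x)}$: unlike the diffusion bound, it is not enough that $\alpha$ be merely continuous, since the penalty term $f'(Y)\alpha$ must be absorbed into $Cf(Y)$, which requires a genuinely linear (Lipschitz-type) decay of $\alpha+C\abs{\varphi}$ toward $\partial D$. This forces local Lipschitz continuity of $\alpha$, and hence of the second-order coefficient $\sum_k b_{ki}b_{kl}\varphi''_{il}$; I would secure it from (H4) and the local Lipschitz continuity of $\partial_x\varphi$ and $\varphi''$, noting that in all the applications $\varphi$ is affine, so $\varphi''\equiv 0$ and this difficulty disappears.
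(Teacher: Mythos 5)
Your overall scheme (It\^o's formula for $\varphi(X)$, a $C^2$ penalty vanishing on $[0,\infty)$, localization in a tube around $\partial D$, Gronwall) is sound, and your diffusion estimate $\abs{\beta_k(t,x)}\le C\abs{\varphi(x)}$ near $\partial D$ is literally the estimate used in the paper. But there is a genuine gap, and it is the one you flag yourself at the end: your drift bound $\alpha(t,x)\ge -C\abs{\varphi(x)}$ outside $D$ requires $\alpha(t,\cdot)$ to be locally Lipschitz, hence requires the map $x\mapsto\sum_{k}\sum_{i,l}b_{ki}(t,x)b_{kl}(t,x)\varphi''_{il}(x)$ to be locally Lipschitz. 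The theorem assumes only $\varphi\in C^2(\R^d)$, so $\varphi''$ is merely continuous, and (H1)--(H4) give no Lipschitz control of this second-order term unless $b$ degenerates near $\partial D$ or $\varphi''$ happens to be Lipschitz. With only a modulus of continuity $\omega$ you get $\alpha(t,x)\ge-\omega(C\abs{\varphi(x)})$, and then the penalized drift $f'(Y)\alpha$ is of order $Y^2\omega(\abs{Y})$, which cannot be absorbed into $Cf(Y)=C\abs{Y}^3$; lowering the power of the penalty does not help either, since one ends up with a Gronwall-type inequality of the form $u(t)\le C\int_0^t\big(\omega(C\delta)\sqrt{u(s)}+u(s)\big)ds$, $u(0)=0$, which does not force $u\equiv 0$ and yields only $\pr(\tau_\delta\le t\wedge\tau_N)\le C_t\,\omega(C\delta)^2/\delta^2$, not vanishing as $\delta\to0$. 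So as written your argument proves the theorem only under the extra hypothesis that $\varphi''$ is locally Lipschitz (e.g.\ $\varphi$ affine, as in the applications); that is strictly weaker than the stated result.

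It is instructive to see how the paper avoids exactly this obstruction. It first reduces to the case $\alpha>0$ strictly on $D$ by perturbing the drift, $a^n=a+n^{-1}\varphi'G$, and using stability of solutions together with closedness of $D$. Under strictness it stops the process at $\tau=\inf\set{s:\alpha(s,X(s))<0}$, which is positive by mere continuity of $\alpha$; up to time $\tau$ the sign of $\alpha(s,X(s))$ is controlled with no quantitative bound outside $D$ at all. It then uses penalties $\psi_n\uparrow\abs{\cdot}$ whose first derivative is bounded by $1$ and whose second derivative is supported in $[-1/n,1/n]$: the drift term is then dominated by $\alpha$ itself (no Lipschitz property of $\alpha$ enters), the quadratic-variation term vanishes as $n\to\infty$ by the same bound $\abs{\beta_k}\le C\abs{\varphi}$ you use, and comparison with the exact identity $\ex{\varphi(X(\theta_u))}=\ex{\int_0^{\theta_u}\alpha(s,X(s))ds}$ gives $\ex{\abs{\varphi(X(\theta_u))}}\le\ex{\varphi(X(\theta_u))}$, i.e.\ $\varphi(X(\theta_u))\ge0$ a.s.; a restart argument then extends viability to all times. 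The price of this route is the three-step structure (perturbation, local statement at the boundary, restart); the price of yours is extra regularity of $\varphi''$. If you are willing to add that regularity hypothesis, your proof is correct and in fact shorter, since it needs neither strict positivity nor the restart; to prove the theorem as actually stated, you would have to replace your Lipschitz drift bound by the paper's stopping-time-plus-bounded-derivative-penalty mechanism, at which point the two proofs essentially coincide.
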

\begin{proof}
Since $D$ is closed and $X(t)$ is a.s.\ continuous, it is enough to prove that $\pr(X(t)\in D)=1$ for all $t\ge 0$.

\emph{Step 1}. First we prove that if  $\alpha(t,x) >0$ a.s.\ for all $t\in\R_+$, $x\in D$ and if $X_0\in\partial D$, then there exists a stopping time $\theta>0$ a.s.\ such that $X(t)\in D$ for all $t\in [0,\theta]$.
To this end, fix some $R>\abs{X_0}$ and define stopping times
$$
\tau = \inf\set{s\in \R_+: \alpha(s,X(s))< 0},\qquad \tau_R = \min\set{s\in \R_+: \abs{X(s)}\ge R}.
$$
As usual, we suppose that a stopping time equals $\infty$ if the corresponding set is empty. It is clear that $\tau>0$ and $\tau_R>0$ a.s.

For any $u\ge 0$ put $\theta_u = u\wedge \tau\wedge \tau_R$ and apply the It\^o formula to the process  $\varphi(X(\cdot))$:
\begin{gather*}
\varphi(X(\theta_u)) = \int_0^{\theta_u}\Big(\alpha(s,X(s))ds + \sum_{k=1}^m\beta_k(s,X(s))dW_k(s) \Big).
\end{gather*}
Since $X$ is bounded on $[0, \theta_u]$, the expectation of the stochastic integral equals zero. Hence
\begin{gather*}
\ex{\varphi(X(t))} = \ex{\int_0^{\theta_u} \alpha(s,X(s))ds}.
\end{gather*}

For a non-negative function $\psi\in C(\R)$ such that $\int_\R \psi(x) dx = 1$ and $\psi(x)=0$, $x\notin[0,1]$, define
$$
\psi_n(x) = n\int_0^{\abs{x}} \int_0^y \psi(nz)dz\,dy.
$$
Obviously, $\psi_n(x)\uparrow \abs{x}$ as  $n\to\infty$ and $|\psi'_n(x)|\le 1$, $n\geq 1$.

Applying the It\^o formula to $\psi_n(\varphi(X(\cdot)))$, we get
\begin{gather*}
\psi_n(\varphi(X(\theta_u))) = \int_0^{\theta_u} \psi'_n(\varphi(X(s)))\big(\alpha(s,X(s))ds + \sum_{k=1}^m\beta_k(s,X(s))dW_k(s) \big)\\ + \frac12\int_0^{\theta_u} \psi''_n(\varphi(X(s))) \sum_{k=1}^m \beta_k(s,X(s))^2 ds.
\end{gather*}
Similarly,
\begin{equation}\label{murmur}
\begin{gathered}
\ex{\psi_n(\varphi(X(\theta_u)))} = \ex{\int_0^{\theta_u} \psi'_n(\varphi(X(s)))\alpha(s,X(s))ds} \\+\frac12\sum_{k=1}^m\ex{\int_0^{\theta_u} \psi''_n(\varphi(X(s))) \beta_k(s,X(s))^2}ds.
\end{gathered}\end{equation}
Recall that $\alpha(s,X_s)\ge 0$ for $s<\theta_u$, and $|\psi_n'(x)|\le 1$,
so the first term in the right-hand side of \eqref{murmur} does not exceed $\ex{\int_0^{\theta_u}\alpha(s,X(s))ds}$. We will
 prove now  that the second term vanishes.

For $x\in \R^d$, let $z(x)$ be the closest to $x$ point
such that $\varphi(z(x))=0$ (any of the points if there are more than one). As the function $\varphi$ is continuously differentiable and its derivative is non-zero on $\partial D$,  there exists $\eps>0$ such that
$$
\abs{x-z(x)}\le C_R\abs{\varphi(x)-\varphi(z(x))}=C_R \abs{\varphi(x)}
$$
whenever $\abs{x}\le R$, $\abs{\varphi(x)}<\eps$. Further, for $\abs{x}\le R$
\begin{gather*}
\abs{\beta_k(s,x)} = \abs{\beta_k(s,x)-\beta_k(s,z(x))}\\\le \abs{\big(\varphi'(x)-\varphi'(z(x)),b_k(s,x)\big)} + \abs{\big(\varphi'(z(x)),b_k(s,x)-b_k(s,z(x))\big)}\\\le C_R\big(\abs{x-z(x)} +  \abs{x-z(x)}\big)\le C_R\abs{x-z(x)}.
\end{gather*}
But for $n>1/\eps$ it holds that $$\psi''_n(\varphi(X(s)) = n\psi(n\varphi(X(s)))=0$$ whenever $\abs{\varphi(X(s))}>\eps$,
whence
\begin{gather*}
\ex{\int_0^{\theta_u} \psi''_n(\varphi(X(s))) \beta_k(s,X(s))^2}ds \le C_R
n\int_0^{\theta_u} \psi(n\varphi(X(s)) \varphi(X(s))^2ds\\\le \frac{C_R u}{n}  \sup_{x\in \R} x^2 \psi(x)\to 0,\quad n\to\infty.
\end{gather*}
Letting $n\to\infty$, we get from \eqref{murmur} that
\begin{gather*}
\ex{\abs{\varphi(X(\theta_u))}} \le\liminf_{n\rightarrow\infty}\ex{\psi_n(\varphi(X(\theta_u)))}\le \ex{\int_0^{\theta_u} \alpha(s,X(s))ds} =\ex{\varphi(X(\theta_u))},
\end{gather*}
and hence, $\varphi(X(\theta_u))\ge 0$ a.s. Since $\theta_u=u\wedge \tau\wedge \tau_R$ and $u$ is arbitrary, we get the desired claim with
$\theta = \tau\wedge \tau_R>0$ a.s.

\emph{Step 2}. We prove the statement of the theorem  under assumption that $\alpha(t,x) >0$ a.s.\ for all $t\ge 0$, $x\in D$.

Define $\tau_D = \inf\set{s\ge 0: X_s\notin D}$ and assume on the contrary that $\pr(\tau_D<\infty)>0$. Then for some
$R>0$ it holds $\pr(\tau_D<\infty, |X(\tau_D)|\le R)>0$. Consider a
new stochastic basis $(\wt \Omega, \wt \cF, \wt{\mathbb F}, \wt \pr)$, where
$\wt \Omega = \set{\tau<\infty, |X(\tau_D)|\le R}$; $\wt \cF= \cF|_{\wt\Omega}$;  $\wt{\mathbb F} = \set{\cF_{t+\tau_D}|_{\wt\Omega},t\ge 0}$,  where
$\cF_{t+\tau_D}$ is the $\sigma$-algebra generated by the stopping time $t+\tau_D$; $\wt \pr(A) = \pr(A)/\pr(\Omega)$,
$A\in\wt\cF$. Define also $\wt X(t) = X(t+\tau_D)$, $\wt a(t,x) = a(t+\tau_D,x)$, $\wt b_k(t,x) = b_k(t+\tau_D,x)$, $k=1,\dots,m$, $\wt W(t) = W(t+\tau_D)-W(\tau_D)$. Evidently, $\wt W$ is a $\wt{\mathbb F}$-Wiener process, and the newly defined coefficients satisfy (H1)--(H4), (V1), and (V2), moreover, $\wt \alpha(t,x) >0$ \ $\wt \pr$-a.s.\ for all $t\ge 0$, $x\in D$. It is easy to see that $\wt X$ solves
$$
\wt X(t) = \wt X(0) + \int_0^t \Big(a(s,\wt X(s),\omega)ds + \sum_{k=1}^m b_k(s,\wt X(s),\omega)d\wt W_k(s)\Big),\quad t\ge 0.
$$
According to Step 1, there exists $\wt\theta>0$ $\wt \pr$-a.s. such that $\wt X(t)\in D$, $t\in[0,\wt \theta]$ \ $\wt \pr$-a.s. Thus, $X(t)\in D$, $t\in[\tau_D,\tau_D + \wt \theta]$ for $\pr$-a.a.\ $\omega\in\wt\Omega$, which contradicts the definition of $\tau_D$.

\emph{Step 3}. Now we prove the statement in its original form. Let $\set{a^n(t,x),n\ge 1}$ be sequence of coefficients such that (H1)--(H4) holds, for any $T> 0$
$$
\ex{\sup_{t\in[0,T]}\sup_{x,y\in\R}\abs{a^n(t,x)-a(t,x)}^2}\to 0,\quad n\to\infty
$$
and almost surely for any $t\ge 0$, $x\in D$ the assumption of Step 1 is satisfied, i.e.
$$
\alpha^n(t,x) := \left(\varphi'(x), a^n(t,x)\right) + \frac12\sum_{k=1}^m \sum_{i,j=1}^d b_{ki}(t,x)b_{kj}(t,x)\varphi''_{ij}(x)> 0.
$$
One can take, for example, $a_n(t,x) = a(t,x) + n^{-1}\varphi'(x) G(x)$ with a positive smooth function $G\colon \R^d\to R$ which does not vanish on $\partial D$ and decays on infinity sufficiently rapidly so that $\varphi'(x)G(x)$ is bounded together with its derivative.

Let $X^n$ be the solution of
\begin{equation}
X^n(t) = X_0 + \int_0^t \Big(a^n(s,X^n(s))ds + \sum_{k=1}^m b_k(s,X^n(s))dW_k(s)\Big),\quad t\ge 0.
\end{equation}
Then it is well known that $X^n(t)\to X(t)$ in probability locally uniformly in $t$. By Steps 1 and 2,
$X^n(t)\in D$ a.s. for all $t\ge 0$. Since $D$ is closed, we get $X(t)\in D$ a.s.\ for all $t\ge 0$.
\end{proof}
We note that one can extend the results of Theorem~\ref{stochviabilityrandcoeffs} to the case where the constants in (H3), (H4) are random
and 
(H4) holds locally
in
$x,y$.

\section{Comparison theorem for equations with random coefficients}

We formulate the result below as multidimensional, while it is basically about a pathwise comparison in one-dimensional case.

Let $X^q$, $q=1,2$, be solutions of stochastic differential equations
\begin{equation*}
X^q(t) = X^q_0 + \int_0^t \Big(a^q(s,X^q(s))ds + \sum_{k=1}^m b_k(s,X^q_l(s))dW_k(s)\Big),\quad t\ge 0,\
\end{equation*}
where the coefficients $a^q$, $q=1,2$, and $b_k$,  $k=1,\dots,m$, satisfy (H1)--(H4); $X_0^q=(X^q_{01},\dots,X^q_{0d})$, $q=1,2$,  are bounded $\cF_0$-measurable random vectors, and $l\in\set{1,\dots,d}$ is a fixed coordinate number.
\begin{theorem}\label{compartheorem}
Assume that
\begin{enumerate}[\rm (C1)]
\item $X^1_{0l}\le X^2_{0l}$ a.s.,
\item  for any $x^1,x^2\in \R^d$ such that $x^1_l= x^2_l$ it holds
$a^1(t,x^1)\le  a^2(t,x^2)$ a.s.
\end{enumerate}
Then $\pr(X^1_l(t)\le X^2_l(t), t\ge 0)=1$.
\end{theorem}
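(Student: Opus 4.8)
The plan is to deduce this comparison result from the viability Theorem~\ref{stochviabilityrandcoeffs} by stacking the two solutions into a single $\R^{2d}$-valued diffusion and demanding that it be viable in the half-space where the $l$-th coordinates are correctly ordered. Concretely, I would set $X=(X^1,X^2)$, which solves the It\^o equation in $\R^{2d}$ with drift $A(t,x^1,x^2)=\big(a^1(t,x^1),a^2(t,x^2)\big)$ and diffusion coefficients $B_k(t,x^1,x^2)=\big(b_k(t,x^1_l),b_k(t,x^2_l)\big)$, $k=1,\dots,m$. Since $a^1,a^2,b_k$ satisfy (H1)--(H4), the stacked coefficients $A,B_k$ inherit measurability, joint continuity, linear growth and the Lipschitz bound, so the stacked equation is uniquely solvable and Theorem~\ref{stochviabilityrandcoeffs} applies to it.

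Next I would take the separating function $\varphi(x^1,x^2)=x^2_l-x^1_l$ and the set $D=\set{(x^1,x^2):\varphi(x^1,x^2)\ge 0}=\set{x^1_l\le x^2_l}$. This $\varphi$ is affine, hence $C^2(\R^{2d})$, and its gradient is the constant vector equal to $-1$ in the first block's $l$-th slot, $+1$ in the second block's $l$-th slot and $0$ elsewhere; in particular it never vanishes, so $D$ is a smooth half-space. Hypothesis (C1) is exactly $X(0)\in D$ a.s. Because $\varphi''\equiv 0$, the second-order term in (V1) drops out and one gets $\alpha(t,x)=(\varphi',A(t,x))=a^2_l(t,x^2)-a^1_l(t,x^1)$, while $\beta_k(t,x)=(\varphi',B_k(t,x))=b_{kl}(t,x^2_l)-b_{kl}(t,x^1_l)$.

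The verification of the viability hypotheses lives on the boundary $\partial D=\set{x^1_l=x^2_l}$. There the two $l$-th coordinates coincide, and since $b_k$ enters each equation only through the $l$-th coordinate and is shared by both equations, $b_{kl}(t,x^1_l)=b_{kl}(t,x^2_l)$, so $\beta_k\equiv 0$ on $\partial D$, i.e.\ the noise is tangent to $\partial D$; this is precisely where the structural assumption that $b$ is common and depends on coordinate $l$ alone is used. Likewise, for $x^1_l=x^2_l$ assumption (C2) gives $a^1_l(t,x^1)\le a^2_l(t,x^2)$, i.e.\ $\alpha(t,x)\ge 0$ on $\partial D$. Feeding these into Theorem~\ref{stochviabilityrandcoeffs} then yields $\pr(X(t)\in D\text{ for all }t)=1$, which is the assertion $\pr(X^1_l(t)\le X^2_l(t),\,t\ge 0)=1$.

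The point that needs care, and what I expect to be the main obstacle, is that (V1) and (V2) are stated for all $x\in D$, whereas the stacked process only satisfies them on $\partial D$: in the interior $x^1_l<x^2_l$ there is no reason for $a^2_l-a^1_l\ge 0$, and the diffusion genuinely cannot be made tangent throughout $D$ without altering the solutions. The remedy is to observe that the proof of Theorem~\ref{stochviabilityrandcoeffs} consumes these conditions only near $\partial D$: tangency enters through the estimate $\abs{\beta_k(s,x)}\le C_R\abs{x-z(x)}$ weighted by the cutoff $\psi_n''$, which is supported in a shrinking neighborhood of $\partial D$, and the sign of $\alpha$ enters only through the stopping time $\tau$ and the restart at boundary points, the non-strict inequality being absorbed by the approximation of Step~3. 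Thus the boundary forms of (C1)--(C2) are exactly what the argument needs, and the comparison follows, mirroring the way Theorem~\ref{compartheorem-mixed} is obtained from Theorem~\ref{viabilitymixedthm}.
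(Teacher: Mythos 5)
Your proposal is correct and is essentially the paper's own proof: the paper likewise stacks $X=(X^1,X^2)\in\R^{2d}$, sets $\varphi(x)=x^2_l-x^1_l$ so that $D=\set{x^1_l\le x^2_l}$, checks $X_0\in D$ and $\alpha(t,x)=a^2_l(t,x^2)-a^1_l(t,x^1)\ge 0$ on $\partial D$, and invokes Theorem~\ref{stochviabilityrandcoeffs}. The only difference is that you additionally flag and resolve the mismatch between hypotheses (V1)--(V2), stated for all $x\in D$, and what actually holds here, namely their validity only on $\partial D$; the paper's four-line proof passes over this silently, and your observation that the proof of Theorem~\ref{stochviabilityrandcoeffs} consumes the sign of $\alpha$ and the vanishing of $\beta_k$ only at and near the boundary (through $\tau$, the restart at $X(\tau_D)\in\partial D$, and the estimate $\abs{\beta_k(s,x)}\le C_R\abs{x-z(x)}$ under the cutoff $\psi_n''$) is exactly the right justification, so your version is if anything more complete.
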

\begin{proof}
Consider the process $X(t) = (X^1(t),X^2(t))\in\R^{2d}$ and set $\varphi(x) = x^2_l -x^1_l$. Then, in
the notation of Theorem~\ref{stochviabilityrandcoeffs}, $D = \set{X^1_{l}\le X^2_{l}}$,
$\partial D = \set{X^1_{l}= X^2_{l}}$ and $\alpha(t,x) = a^2(t,x)-a^1(t,x)$. By the assumption, $X_0=(X^1_0,X^2_0)\in D$
and $\alpha(t,x)\ge 0$ a.s.\ for $x\in\partial D$. Thus, we get the desired statement from Theorem~\ref{stochviabilityrandcoeffs}.
\end{proof}

\begin{remark}\label{onedimcomparrem}
In \cite{milian}, the following result is given (we reformulate it slightly according to our notation).
\begin{theorem}[\cite{milian}]
Let $X^q$, $q=1,2$, be solutions of stochastic differential equations
\begin{equation*}
X^q(t) = X^q_0 + \int_0^t \Big(a^q(s,X^q(s))ds + \sum_{k=1}^m b^q_k(s,X^q(s))dW_k(s)\Big),\quad t\ge 0,\
\end{equation*}
where $a^q\colon \R_+\times \R^d\to \R^d$ and $b^q\colon \R_+\times \R^d\to \R^d$ satisfy the linear growth and the Lipschitz
continuity assumptions, $X_0^q\in\R$ are non-random. Let $I\subset\set{1,\dots,d}$ be a non-empty set. Then the following
assertions are equivalent:
\begin{enumerate}[\rm (a)]
\item for any $X^q_0=(X^q_{01},\dots,X^q_{0d})$, $q=1,2$, such that $X^1_{0j}\le X^2_{0j}$ for all $j\in I$ it holds
$$
\pr(X^1_j(t)\le X^2_j(t), j\in I,t\ge 0)=1;
$$
\item for any $p\in I$ and $x^i=(x^i_{1},\dots,x^i_{d})$, $i=1,2$, such that $x^1_{j}\le x^2_{j}$, $j\in I$,
$x^1_{p}= x^2_{p}$ it holds
$$
a^1_p(t,x^1) \le a^2_p(t,x^2),\quad b^1_{kp}(t,x^1) = b^2_{kp}(t,x^2), k=1,\dots,m.
$$
\end{enumerate}
\end{theorem}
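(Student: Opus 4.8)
The plan is to prove the two implications by different methods: sufficiency $(b)\Rightarrow(a)$ by a penalization argument together with Gronwall's lemma, and necessity $(a)\Rightarrow(b)$ by inspecting the short-time behaviour of the difference process.

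For sufficiency, put $Y_j(t)=X^2_j(t)-X^1_j(t)$, $j\in I$, let $g$ be a smooth nonnegative approximation of $x\mapsto(x_-)^2$ with $g(x)=0$ for $x\ge0$, $\abs{g'}\le C$, $0\le g''\le C$, and set $\Phi(y)=\sum_{j\in I}g(y_j)$. Applying the It\^o formula to $\Phi(Y(\cdot))$ and taking expectations kills the stochastic integrals and leaves a drift term weighted by $g'(Y_p)\le0$ and a diffusion term weighted by $g''(Y_p)\ge0$, both supported on $\set{Y_p<0}$. The heart of the argument is to estimate the coefficient increments on this set solely through $\sum_{j\in I}(Y_j)_-$. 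For fixed $p\in I$ I would use the intermediate point $\tilde x$ coinciding with $X^2$ off $I$, with $p$th coordinate $X^1_p$ and $j$th coordinate $\max(X^1_j,X^2_j)$ for $j\in I\setminus\set{p}$; then $X^1_j\le\tilde x_j$ for all $j\in I$ and $\tilde x_p=X^1_p$, so $(b)$ applies to $(X^1,\tilde x)$ and gives $a^1_p(s,X^1)\le a^2_p(s,\tilde x)$ and $b^1_{kp}(s,X^1)=b^2_{kp}(s,\tilde x)$. Since $\abs{X^2-\tilde x}\le\sum_{j\in I}(Y_j)_-$, the Lipschitz hypothesis yields $a^2_p(s,X^2)-a^1_p(s,X^1)\ge-C\sum_j(Y_j)_-$ and $\abs{b^2_{kp}(s,X^2)-b^1_{kp}(s,X^1)}\le C\sum_j(Y_j)_-$. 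Inserting these into the It\^o expansion, using $g'(Y_p)=2Y_p$ and $g''(Y_p)=2$ on $\set{Y_p<0}$ and the bound $\big(\sum_{j\in I}(Y_j)_-\big)^2\le\abs{I}\sum_{j\in I}(Y_j)_-^2=\abs{I}\,\Phi(Y)$, I expect to arrive at $\frac{d}{dt}\ex{\Phi(Y(t))}\le C\ex{\Phi(Y(t))}$. As $\Phi(Y(0))=0$ by the assumption $X^1_{0j}\le X^2_{0j}$, $j\in I$, Gronwall's lemma forces $\ex{\Phi(Y(t))}\equiv0$, hence $Y_j(t)\ge0$ a.s.\ and, by path continuity, simultaneously for all $t$.

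The main obstacle in this direction is precisely the multidimensional coupling: when several coordinates drop below zero at the same instant, a naive single-coordinate intermediate point would leave the order cone and $(b)$ could not be invoked. The coordinatewise maximum in the definition of $\tilde x$ is what keeps $(X^1,\tilde x)$ admissible for $(b)$ while remaining $O\big(\sum_j(Y_j)_-\big)$-close to $X^2$; this is what substitutes for the smoothness of $\partial D$ required in Theorem~\ref{stochviabilityrandcoeffs}, the set $\set{x^2_j\ge x^1_j,\ j\in I}$ being only a polyhedral cone. For $\abs{I}=1$ the statement is the one-dimensional comparison underlying Theorem~\ref{compartheorem}. The secondary fact that $(x_-)^2$ is merely $C^{1,1}$ I would settle by passing to the limit in the smooth approximants $g$.

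For necessity, fix $x^1,x^2$ with $x^1_j\le x^2_j$ $(j\in I)$ and $x^1_p=x^2_p$, take them as initial data, and set $Y_p(t)=X^2_p(t)-X^1_p(t)$, nonnegative a.s.\ by $(a)$ with $Y_p(0)=0$. Write $Y_p=A+M$ with drift $A_t=\int_0^t\big(a^2_p(s,X^2)-a^1_p(s,X^1)\big)ds$ and continuous martingale $M_t=\sum_k\int_0^t\big(b^2_{kp}(s,X^2)-b^1_{kp}(s,X^1)\big)dW_k(s)$. If some diffusion increment were nonzero at $(0,x^1,x^2)$, then by continuity $\langle M\rangle_t\sim ct$ with $c>0$, so by Dambis--Dubins--Schwarz $M$ is a time-changed Brownian motion whose law of the iterated logarithm makes $M_t$ attain negative values of order $\sqrt{t\log\log(1/t)}$ along a sequence $t\downarrow0$, while $\abs{A_t}\le Ct$; hence $Y_p$ would be negative with positive probability, contradicting $(a)$, so $b^2_{kp}(0,x^2)=b^1_{kp}(0,x^1)$. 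For the drift I simply take expectations: $\ex{Y_p(t)}=\int_0^t\ex{a^2_p(s,X^2)-a^1_p(s,X^1)}ds\ge0$, and dividing by $t$ and letting $t\downarrow0$ gives $a^2_p(0,x^2)-a^1_p(0,x^1)\ge0$ by continuity. To extend these relations from the initial instant to every $(t_0,x^1,x^2)$ I would restart the same local analysis at time $t_0$ via the Markov property and a choice of initial data bringing the solutions near $(x^1,x^2)$ at $t_0$ through a support argument; this upgrading is routine but somewhat tedious.
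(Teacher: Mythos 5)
First, a point of reference: the paper never proves this theorem. It is Milian's result, quoted inside Remark~\ref{onedimcomparrem} only for discussion; the remark merely extracts consequences of (b) (the $b^i_{kp}$ coincide and depend on no coordinates outside $I$, indeed on no coordinates at all when $I$ has two or more elements), which amounts to a sketch of the sufficiency direction by subtracting the common stochastic integral and invoking one\mbox{-}dimensional or ODE comparison. So your proposal must be judged on its own merits. Your sufficiency direction $(b)\Rightarrow(a)$ is correct and is a clean, self-contained alternative to that reduction: the coordinatewise-maximum intermediate point $\tilde x$ is exactly the right device --- it keeps the pair $(X^1,\tilde x)$ admissible for (b) while $\abs{X^2-\tilde x}\le\sum_{j\in I}(Y_j)_-$ on the set $\set{Y_p<0}$ carrying $g'(Y_p)$ and $g''(Y_p)$ --- and the Gronwall step then closes, modulo the routine integrability/martingale justifications and the harmless inconsistency that $g'$ cannot be both globally bounded and equal to $2Y_p$ on $\set{Y_p<0}$ (what you actually need is $\abs{g'(x)}\le 2(x)_-$, $0\le g''\le 2$).

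The necessity direction contains a genuine gap, located exactly at the step you call ``routine but somewhat tedious''. Your local analysis (Dambis--Dubins--Schwarz plus the law of the iterated logarithm for the martingale part, expectations for the drift) is fine, but it establishes (b) only at $t=0$. The extension to $t_0>0$ cannot be carried out from (a) as stated, because (a) quantifies only over ordered initial data at time $0$, and the pair $(X^1,X^2)$ is driven by common noise: in general no support or Markov argument can steer ordered time-$0$ data into a configuration at time $t_0$ with $X^1_p(t_0)=X^2_p(t_0)$ near a prescribed admissible pair $(x^1,x^2)$. In fact the implication $(a)\Rightarrow(b)$ is false under the hypotheses as quoted: take $d=m=1$, $b^1=b^2\equiv 0$, $a^1\equiv 0$, $a^2(t,x)=\sin 2t$. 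These coefficients are Lipschitz with linear growth; then $X^1(t)=X^1_0$ and $X^2(t)=X^2_0+\sin^2 t\ge X^2_0$, so (a) holds, yet $a^2_1(t,x)<0=a^1_1(t,x)$ for $t\in(\pi/2,\pi)$, so (b) fails. Hence no argument can close your gap in this formulation. The equivalence is restored --- and your time-$t_0$ analysis becomes literally your time-$0$ analysis --- if either the coefficients are autonomous, or (a) is strengthened to require the comparison for solutions started from ordered data at every initial time $s\ge 0$; this is how a converse must be formulated for time-dependent coefficients, and the defect here should be attributed to the statement you were given (the paper itself warns it has ``reformulated slightly'' the result of \cite{milian}) rather than repaired inside your proof.
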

First we note that it follows from (b) that $b^i_{kp}$, $i=1,2$ are equal and independent of all coordinates
except those from $I$.

Now we argue why comparing more than one coordinate makes only little sense. For example, take $I=\set{1,2}$ and observe that $b^i_{kp}$ are independent of $x$. Indeed,
from (b), for any $p\in I$
\begin{gather*}
b^1_{kp}(t,x_1,x_2,\dots) = b^2_{kp}(t,x_1,x_2\vee x_2',\dots) = b^1_{kp}(t,x_1,x_2\vee x_2',\dots)\\ =
b^2_{kp} (x_1\vee x_2',x_2\vee x_2',\dots) = b^1_{kp} (x_1\vee x_2',x_2\vee x_2',\dots),
\end{gather*}
and similarly
\begin{gather*}
b^1_{kp}(t,x_1,x_2,\dots) =  b^1_{kp} (x_1\vee x_2',x_2\vee x_2',\dots).
\end{gather*}
The dots here can be anything, so we derive $b^1_{kp}(t,x) = b^2_{kp}(t,x) = b_{kp}(t)$. Thus, for $p\in I$, defining
$Y^i(t) = X^i_p(t)- B_p(t)$, where $B_p(t) = \sum_{k=1}^m \int_0^t b_{kp}(s)dW_k(s)$, we can write
the equation for $Y^i$ as
$$
dY^i(t) = a^i(t,Y^i(t)-B_p(t)) =:  f^i(t,Y^i(t))dt.
$$
Though $f^i$ is random, this is an ordinary differential equation. We can solve assuming that all other coordinates and $\omega$
are given. The conditions we have guarantee that $f^1(t,y)\le f^2(t,y)$ disregarding what happens to other coordinates or $\omega$,
so from comparison theorem for ordinary differential equations we conclude that $Y^1(t)\le Y^2(t)$ for all $t\ge 0$ whenever $Y^1(0)=X^1_{0p}\le Y_2(0)= X^2_{0p}$, whence the same is true for $X^i_p(t)$.

Now for $I$ containing a single element we still get that $b(t,x)$ does not depend on other coordinates.
For $a$, we get exactly condition (C2) from Theorem~\ref{compartheorem}. Again, assuming that all other coordinates
of $X^1$ and $X^2$ are given, we get one-dimensional equations with adapted random coefficients. But (C2) guarantees that
this random coefficients are properly ordered whatever happens with other coordinates, so this is essentially a one-dimensional result in a very rigorous sense. It follows that there is nothing new in this result compared to \cite{ikeda-wata} (except randomness of coefficients, but it
can be checked that the proof in \cite{ikeda-wata} works for random coefficients).
\end{remark}

\section{Proof of Theorem \ref{ex-uniq}}
To avoid technical difficulties, we
consider  equation  \eqref{cir-pure} with $\lambda=\frac12$. The proof is similar for remaining cases.

Cases (I), (II).   For any $\varepsilon\in(0,X_0)$ we introduce a smooth coefficient of the form $$d_{\varepsilon}(x)=\sqrt{x}1_{x\geq\varepsilon}+(5/2\varepsilon^{-3/2}x^{2}-3/2\varepsilon^{-5/2}x^3)1_{0\leq x\leq \varepsilon}.$$
The function $d_{\varepsilon}\in C^1(\R)$, and its derivative $$d_{\varepsilon}'(x)=1/2x^{-1/2}1_{x\geq\varepsilon}+(5\varepsilon^{-3/2}x-9/2\varepsilon^{-5/2}x^2)1_{0\leq x\leq \varepsilon}$$ satisfies the Lipschitz condition. Therefore, the following equation with the smooth diffusion
\begin{equation}\label{cir-pure-appr}
dX_{\varepsilon}(t)=aX_{\varepsilon}(t)dt + d_{\varepsilon}(X_{\varepsilon}(t))dB^H(t), t\geq 0, X_{\varepsilon}(0)=X_0>0
\end{equation}
has a unique solution. Let us introduce the Markov moment
$$\tau_{\varepsilon}=\inf\{t>0: X_{\epsilon}(t)\leq\varepsilon\}>0.$$
Obviously, $X_{\varepsilon}(\tau_{\varepsilon})=\varepsilon$ on the set $\{\tau_{\varepsilon}<\infty\}$ and $X_{\varepsilon}(t)> \varepsilon$ for  $t\geq 0$ on the set $\{\tau_{\varepsilon}=\infty\}$. It means that on the interval $[0,\tau_{\varepsilon} )$
there exists the  solution of equation \eqref{cir-pure} because on this interval coefficients of \eqref{cir-pure} and  \eqref{cir-pure-appr} coincide. Moreover, the solution of \eqref{cir-pure} is unique on $[0,\tau_{\varepsilon} )$  because any solution of \eqref{cir-pure} on $[0,\tau_{\varepsilon} )$ can be extended to the solution of \eqref{cir-pure-appr} on $\R_+$. In turn, it means that solution $X$ of equation \eqref{cir-pure}
exists and is unique on the interval $[0, \sup_{\varepsilon>0}\tau_{\varepsilon} )$. We note also that  $\tau_{\varepsilon_1}<\tau_{\varepsilon_2}$
for  $\varepsilon_2<\varepsilon_1$. In order to prove that $X$ is continuous  at the point
$\sup_{\varepsilon>0}\tau_{\varepsilon}$ on
the set $\{\sup_{\varepsilon>0}\tau_{\varepsilon}<\infty\} $ (in this case $X(\sup_{\varepsilon>0}\tau_{\varepsilon})=0$ on
the set $\{\sup_{\varepsilon>0}\tau_{\varepsilon}<\infty\} $), we estimate  the expectation of the solution.

Let $H>2/3$, $\varepsilon>0$,   $t>0$, $R>1$, $L>0$ and $\beta\in(1-H, 1/3)$  be fixed. For  $\delta\in(0,1/3-\beta)$, consider  the process
$$
Y(t)=\int_0^t \frac{{|X(t)-X(u)|}}{(t-u)^{1+2\beta+\delta}}du.
$$
It is well defined on any $[0,\tau_\varepsilon]$ because the numerator does not exceed $C(\omega)(t-u)^{H-\kappa}$ for any $0<\kappa<H$ and we can choose $\kappa$ in such a way that $H-\kappa>2\beta+\delta$.
Define
\begin{gather*}
\nu_L=\inf\{s\geq 0: X(t)+Y(t)\geq L\}\wedge\tau_{\varepsilon},\\
\tilde{\nu}_R=\inf\{s>0:\Lambda_{s, \beta}(B^H)\geq R\}
\end{gather*}
 and $\theta=\theta_{R,L}
=\nu_L\wedge\tilde{\nu}_R.$ According to \eqref{ineq}, we have
\begin{equation}\label{ineq1}
\begin{gathered}
\ex{\sup_{z\leq t}X({z\wedge \theta})}\\
\leq X_0+|a|\int_0^t\ex{X({s\wedge\theta})}ds+\sigma \ex{\sup_{z\leq t}\abs{\int_0^{z\wedge\theta}\sqrt{X(s)}dB^H(s)}}\\
\leq X_0+|a|\int_0^t\ex{X({s\wedge\theta})}ds+C_\beta R\sigma\Big( \int_0^t\frac{\sqrt{\ex{X(s\wedge\theta)}}}{s^\beta}ds\\
{}+\int_0^t\int_0^s\frac{\sqrt{\ex{|X(s\wedge\theta)-X(u\wedge\theta)|}}}
{(s-u)^{1+\beta}}du\,ds\Big).
\end{gathered}
\end{equation}
The integral $I_1=\int_0^t\frac{\sqrt{\ex{X(s\wedge\theta)}}}{s^\beta}ds$ admits an upper  bound
$$I_1\leq C\int_0^t \ex{X(s\wedge\theta)}ds+Ct^{1-2\beta}.$$
To estimate the integral $I_2=\int_0^t\int_0^s\frac{\sqrt{\ex{|X(s\wedge\theta)-X(u\wedge\theta)|}}}
{(s-u)^{1+\beta}}du\,ds$, note that
\begin{equation}\label{ineq1prime}
\begin{gathered}\frac{\sqrt{\ex{|X(s\wedge\theta)-X(u\wedge\theta)|}}}
{(s-u)^{1+\beta}}\\\leq \frac12\Big(\frac{{\ex{|X(s\wedge\theta)-X(u\wedge\theta)|}}}
{(s-u)^{1+2\beta+\delta}}+(s-u)^{-1+\delta}\Big),
\end{gathered}
\end{equation}
whence
$$
I_2\leq C+\int_0^t\int_0^s \frac{{\ex{|X(s\wedge\theta)-X(u\wedge\theta)|}}}
{(s-u)^{1+2\beta+\delta}}du\,ds.
$$
Therefore, denoting
$$
f_1(t)=\ex{\sup_{z\leq t}X(z\wedge\theta)},\;f_2(t)=\ex{Y(t\wedge\theta)}= \int_0^t\frac{{\ex{|X(t\wedge\theta)-X(u\wedge\theta)|}}}
{(s-u)^{1+2\beta+\delta}}du,
$$
we obtain from \eqref{ineq1} and the above estimates that
\begin{equation}\label{ineq2}
\begin{gathered}
f_1(t)\leq C+CR\int_0^tf_1(s)ds+CR\int_0^tf_2(s)ds.
\end{gathered}
\end{equation}
Furthermore, the function $f_2$ admits the following upper bound:
\begin{equation}
\label{ineq3}
\begin{gathered}
f_2(t)\leq \int_0^t(t-u)^{-1-2\beta-\delta}\bigg(C\int_u^tf_1(s)ds+CR\int_u^t\frac{\sqrt{f_1(s)}}{s^\beta}ds\\
+\int_u^t\int_u^s\frac{\sqrt{\ex{|X(s\wedge\theta)-X(r\wedge\theta)|}}}{(s-r)^{1+\beta}}dr\,ds\bigg)du\\ \leq C\int_0^tf_1(s)(t-s)^{-2\beta-\delta}ds+CR\int_0^t \sqrt{f_1(s)}{s^{-\beta}}(t-s)^{-2\beta-\delta}ds\\+
CR\int_0^t \int_0^s\frac{\sqrt{\ex{|X(s\wedge\theta)-X(r\wedge\theta)|}}}{(s-r)^{1+\beta}}dr(t-s)^{-2\beta-\delta}ds.
\end{gathered}
\end{equation}
Consider the second and the third terms in the right-hand side of \eqref{ineq3} separately. Recall that $\delta<1/2-\beta$ and rewrite the
second term as follows:
\begin{equation}\label{ineq4}
\begin{gathered}
\int_0^t \sqrt{f_1(s)}{s^{-\beta}}(t-s)^{-2\beta-\delta}ds
\\=\int_0^t \sqrt{f_1(s)}(t-s)^{1/2-3\beta-3\delta}{s^{-\beta}}(t-s)^{-1/2+2\delta+\beta}ds\\=
\int_0^t {f_1(s)}(t-s)^{1-6\beta-6\delta}ds+\int_0^t{s^{-2\beta}}(t-s)^{-1+2\delta+2\beta}ds\\
=\int_0^t {f_1(s)}(t-s)^{1-6\beta-6\delta}ds+ Ct^{2\delta}.
\end{gathered}
\end{equation}
From the choice of $\beta$ we have $1-6\beta-6\delta>-1.$ To estimate the third term, we apply \eqref{ineq1prime}
and get
\begin{equation}\label{ineq5}
\begin{gathered}
\int_0^t
\int_0^s\frac{\sqrt{\ex{|X(s\wedge\theta)-X(r\wedge\theta)|}}}{(s-r)^{1+\beta}}dr(t-s)^{-2\beta-\delta}ds\\ \leq
\int_0^tf_2(s)(t-s)^{-2\beta-\delta}ds+Ct^{1-2\beta}.
\end{gathered}
\end{equation}
Finally, we get from \eqref{ineq1}--\eqref{ineq5} that
\begin{equation}\label{ineq6}
\begin{gathered}
f_1(t)+f_2(t)\leq CR\Big(1+\int_0^t(f_1(s)ds+f_2(s))(t-s)^{\gamma}ds\Big),
\end{gathered}
\end{equation}
where $\gamma=(1-6\beta-6\delta)\wedge(-2\beta-\delta)>-1$.
It follows from \eqref{ineq6} and the generalized Gronwall inequality that
\begin{equation}\label{ineq7}
\begin{gathered}
f_1(t)+f_2(t)\leq CR\exp\{CRt\}.
\end{gathered}
\end{equation}
Furthermore, the upper bound in \eqref{ineq7} does not depend on $L$. Therefore, by the Fatou lemma,
$$
\ex{\sup_{z\leq t}X(z\wedge\tilde{\nu}_R\wedge\tau_{\varepsilon})}\leq CR\exp\{CRt\},
$$
and by the Lebesgue monotone convergence theorem,
$$
\ex{\sup_{z\leq t}X\big(z\wedge\tilde{\nu}_R\wedge(\sup_{\varepsilon>0} \tau_{\varepsilon})\big)}\leq CR\exp\{CRt\}.
$$
It means that the both components, $\int_0^tX(s)ds$ and $\int_0^t\sqrt{X(s)}dB^H(s)$,
are almost surely bounded and continuous on $\big[0, \tilde{\nu}_R\wedge(\sup_{\eps>0}\tau_{\varepsilon}) \big]$
if $\sup_{\eps>0}\tau_{\varepsilon}<\infty$.
Since $\tilde{\nu}_R\rightarrow\infty$ a.s.,
we claim that $X$ is continuous on $\big[0, \sup_{\eps>0}\tau_{\varepsilon} \big]$ if $\sup_{\eps>0}\tau_{\varepsilon}<\infty$.
But $X(\tau_{\varepsilon})\rightarrow 0$ as $\eps\to0$ if $\sup_{\eps>0}\tau_{\varepsilon}<\infty$,
therefore $X(\sup_{\varepsilon>0}\tau_{\varepsilon})=0$ on the set $\{\sup_{\eps>0}\tau_{\varepsilon}<\infty\}$.

Case (III). It is easy to see that $\nu_0=\sup_{\varepsilon>0}\tau_{\varepsilon}$.
Assume that there exists the solution of equation \eqref{cir-pure} with $\lambda=\frac12$
such that for some $\rho>0$ probability of the event $A=\{\nu_\rho=\inf\{t>\nu_0: X(t=\rho\}<\infty\}$ is nonzero.
Then we can consider the new probability space $\tilde{\Omega}=A$
and repeat the arguments above to prove the uniqueness of the solution of equation \eqref{cir-pure} on some interval
$[\nu_\rho, \nu)$. However, equation \eqref{cir-pure} has on this interval zero solution, and we get a contradiction.

\end{document}